\def\ov{\overline}
\theoremstyle{plain}%
  \newtheorem{theorem}{Theorem}[section]
  \newtheorem{corollary}[theorem]{Corollary}
  \newtheorem{proposition}[theorem]{Proposition}
  \newtheorem{lemma}[theorem]{Lemma}
  \newtheorem{definition}[theorem]{Definition}
\newtheorem{remark}[theorem]{Remark}
\newfont{\hueca}{msbm10}
\begin{document}

\noindent{\Large Decompositions of linear spaces induced by $n$-linear maps}\footnotetext{The work is  supported by FAPESP 17/15437-6; by the PCI of the UCA `Teor\'\i a de Lie y Teor\'\i a de Espacios de Banach', by the PAI with project numbers FQM298, FQM7156 and by the project of the Spanish Ministerio de Educaci\'on y Ciencia  MTM2016-76327C31P. }

   \

   {\bf   Antonio Jesús Calder\'on$^{a}$,  Ivan Kaygorodov$^{b}$, Paulo Saraiva$^{c}$ \\

\bigskip


{\bf References}

    \medskip
}

\renewcommand{\thefootnote}{}

{\tiny

$^{a}$ Departamento de Matem\'aticas, Universidad de C\'adiz. Puerto Real, C\'adiz, Espa\~na.

$^{b}$ CMCC, Universidade Federal do ABC. Santo Andr\'e, Brasil.

$^{c}$ Universidade de Coimbra, CeBER, CMUC e FEUC, Coimbra, Portugal
\

\smallskip

   E-mail addresses:

\smallskip   
     Antonio Jesús Calder\'on  (ajesus.calderon@uca.es),

\smallskip
    Ivan Kaygorodov (kaygorodov.ivan@gmail.com),

\smallskip    
Paulo Saraiva (psaraiva@fe.uc.pt).

}

\

\

{\bf ABSTRACT.}
Let $\mathbb V$ be an arbitrary linear space and $f:\mathbb V \times \ldots \times  \mathbb V \to \mathbb V$ an $n$-linear map. It is proved that, for each choice of a basis ${\mathcal B}$ of $\mathbb V$, the $n$-linear map $f$ induces a (nontrivial) decomposition $\mathbb V= \oplus V_j$ as a direct sum of linear subspaces of $\mathbb V$, with respect to ${\mathcal B}$. It is shown that this decomposition is $f$-orthogonal  in the sense that $f(\mathbb V, \ldots, V_j, \ldots, V_k, \ldots, \mathbb  V) =0$ when $j \neq k$, and in such a way that any $V_j$ is strongly $f$-invariant, meaning that  $f(\mathbb  V, \ldots, V_j, \ldots, \mathbb V) \subset V_j.$  
A sufficient condition for two different decompositions of $\mathbb V$ induced by an $n$-linear map $f$, with respect to two  different bases of $\mathbb V$, being isomorphic is deduced. The $f$-simplicity -- an analogue of the usual simplicity in the framework of $n$-liner maps -- of any linear subspace $V_j$ of a certain decomposition induced by $f$ is characterized. Finally,  an application to the structure theory   of arbitrary $n$-ary algebras is provided. This work is a close generalization the results obtained by A. J. Calderón (2018)  \cite{Yo4}.
\bigskip

{\it Keywords}: Linear space, $n$-linear map, orthogonality, invariant subspace, decomposition theorem.

 \ 
 
\medskip

{\it 2010MSC}: 15A03, 15A21, 15A69, 15A86.

\newpage

\section{Introduction}

The main idea of this paper is to present an $n$-ary ($n>2$) generalization of the results achieved by the first author on the decomposition of linear spaces induced by bilinear maps on a linear space \cite{Yo4}.

In the mentioned paper, given a linear space $\mathbb V$ of arbitray dimension and a bilinear map $f$ on $\mathbb V$, Calderón introduced the notions of $f$-orthogonal, $f$-invariant and strongly $f$-invariant subspaces, as well as the notion of $f$-simplicity, which are just the usual notions of orthogonality, invariance and simplicity, but now defined with respect to $f$. Then, for a fixed basis of $\mathbb V$, he developed connection tecnhiques allowing to obtain a first nontrivial decomposition of $\mathbb V$ as the direct sum of $f$-orthogonal vector subspaces. In order to improve the obtained decomposition he introduced an adequate equivalence relation on the above family of linear subspaces, leading to the first main result: a nontrivial decomposition of $\mathbb V$ as an $f$-orthogonal direct sum of strongly $f$-invariant linear subspaces, with respect to a fixed basis. After that, observing that different choices of the bases of $\mathbb V$ may lead to different decompositons, he studied sufficient conditions to assure induced isomorphic decompositions of $\mathbb V$ with respect to different bases of $\mathbb V$. Another important result gives necessary and sufficient conditions for the $f$-simplicity of  the linear subspaces in the second  decomposition of $\mathbb V$. The author ends the paper providing an application of the previous results to the the structure theory of arbitrary algebras.

At this point, a parenthesis is due to underline the considerable amount of recent works where the above mentioned and similar connection techniques are applied as a tool to obtain interesting results in the frameworks of several types of algebras. Without being exhaustive, these techniques were used, for instance, along with the notions of multiplicative basis and quasi-multiplicative basis not only related with algebras (see Caledrón and Navarro, \cite{Yo,Yo2}), but also with some $n$-ary generalizations (see, {\it e.g.}, the works of Calder\'on, Barreiro, Kaygorodov and S\'anchez in \cite{bcks,kmod,Yo_n_algebras}). Further, connection techniques were also applied in the context of graded Lie algebras (see Calderón (2014) \cite{Yo3}) and to obtain structural results on graded Leibniz triple systems (see Cao and Chen (2016) \cite{Cao2}).

The present work follows an approach that uses, as close as possible, generalized $n$-ary versions of the techniques applied in \cite{Yo4}, obtaining generalized results which are similar to those of Calderón.

The paper is organized as follows. In Section 2 we present the necessary basic notions related with $n$-linear maps and develop all connection techniques needed to obtain the main results. As a consequence, we get that each choice of a basis ${\mathcal B} $ of  $\mathbb V$ rises a first nontrivial decomposition of $\mathbb V$, induced by $f$, as an $f$-orthogonal direct sum of linear subspaces with respect to ${\mathcal B} $. This decomposition is then enhanced by the introduction of an adequate equivalence relation on the above family of linear subspaces, leading to our first main result: $\mathbb V$ decomposes as a nontrivial $f$-orthogonal direct sum of strongly $f$-invariant linear subspaces, with respect to a fixed basis. 

In Section 3 the relation among the previous decompositions of $\mathbb V$ given by different choices of its bases is discussed. Concretely, after defining the notion of orbit associated to an $n$-linear map $f$, it is shown that if two bases, ${\mathcal B} $ and ${\mathcal B}^{\prime} $  of  $\mathbb V$ belong to the same orbit under an action of a certain subgroup of ${\rm GL}(\mathbb V)$ on the set of all bases of  $\mathbb V$, then they induce isomorphic decompositions of  $\mathbb V$.

In Section 4 we generalize the concept of $i$-division basis to the case of $n$-ary algebras. After that, we obtain a characterization of the $f$-simplicity of the components  of the main decomposition obtained in Section 2. That is, we prove that any of the linear subspaces in the decomposition of $\mathbb V$  in $f$-orthogonal, strongly $f$-invariant linear subspaces of $\mathbb V$  is $f$-simple if and only if its annihilator is zero and it admits an $i$-division basis. 

Finally, in Section 5 an application of the previous results to the the structure theory of arbitrary $n$-ary algebras is included.

\section{Development of the techniques. First decomposition theorem}

We begin by noting that throughout the paper all  of the  linear spaces $\mathbb  V$ considered are of arbitrary dimension and over an arbitrary base field ${\mathbb F}$.
Hereinafter, $\mathbb V$ is a linear space and  $f:  \mathbb V\times \dots \times \mathbb V \to \mathbb V$ an $n$-linear map on $\mathbb V$, $n\geq 2$.
We start recalling some notions concerning $\mathbb V$ and $f$.

\begin{definition}\rm
 Two linear subspaces $V_1$ and $V_2$ of $\mathbb V$ are called {\it $f$-orthogonal} if
$$f(\mathbb V,\dots, V_1^{(i)},\dots,V_2^{(j)},\dots,\mathbb V)=0,$$
for all $i,j\in \left\{1,\dots,n \right\}$, $i \neq j$, where the notations $V_1^{(i)}$ and $V_2^{(j)}$ mean that $V_1$ and $V_2$ occupy the $i$-th and $j$-th entries of $f$, respectively.

It is also said that a decomposition of $\mathbb V$ as a direct sum of linear subspaces
$$\mathbb V= \bigoplus_{j\in J} V_j$$ is {\it $f$-orthogonal} if $V_j$ and $V_k$ are $f$-orthogonal for any $j,k \in J$, with $j \neq k
$.
\end{definition}

\begin{definition}\rm
A  linear subspace $ W$ of $\mathbb V$ is  called {\it $f$-invariant } if
$$f(W,\dots,W)\subset W.$$ 
The linear space $W$ is called  {\it strongly $f$-invariant } if 
$$f(\mathbb V,\dots, W^{(i)},\dots,\mathbb V)\subset W,$$
for all $i \in \left\{1,\dots,n \right\}$. The linear space $\mathbb V$ will be called {\it $f$-simple} if 
$$f(\mathbb V,\dots,\mathbb V) \neq 0$$ 
and its only strongly $f$-invariant subspaces are $\{0\}$ and $\mathbb V$.
\end{definition}

\begin{definition}
The {\it annihilator} of  $f$ is defined as the set
$${\rm Ann}(f)=\{v \in \mathbb V: f(\mathbb V,\dots, v^{(i)},\dots,\mathbb V)=0, \text{ for all } i \in \left\{1,\dots,n \right\}\}. $$
\end{definition}

\medskip

Let us fix a basis ${\mathcal B}=\{e_i\}_{ i \in I}$ of $\mathbb V$. For each $e_i \in {\mathcal B}$, we introduce a symbol  $\overline{e}_i \notin {\mathcal B}$ and the following set
$$\overline{{\mathcal B}} := \{\overline e_i : e_i \in {\mathcal B}\}.$$ 
We will also write $\overline{(\overline{e}_i)} := e_i \in {\mathcal B}$, $\mathbb V^{*}:=\mathbb V \setminus \{0\}$ and  $\mathcal{P}(\mathbb V^{*})$ the power set of $\mathbb V^{*}$.
\medskip

We define the $n$-linear mapping

\begin{equation}\label{gene}
  F: \mathcal{P}(\mathbb V^{*})\times \left( ({\mathcal B}\dot\cup \overline{{\mathcal B}})\times \dots  \times ({\mathcal B}\dot\cup \overline{{\mathcal B}} )\right)  \to \mathcal{P}(\mathbb V^{*})
 \end{equation}
   as

\begin{itemize}

\item[(i)]  $ F(\emptyset ,  {\mathcal B}\dot\cup \overline{{\mathcal B}}, \dots, {\mathcal B}\dot\cup \overline{{\mathcal B}} )= \emptyset$.

 \item[(ii)]   For any $\emptyset \neq U \in {\mathcal P}( \mathbb V^{*})$  and $\xi_i \in {\mathcal B} $, $i=1,\dots,n-1,$
$$\hspace{-1cm} F(U, \xi_{1},\dots, \xi_{n-1})=\left( \bigcup_{  
\begin{array}{c}
k\in 	\{1,\dots,n\}\\  
\sigma \in {\mathbb S}_{n-1}
\end{array}
} \{f(\xi_{\sigma (1)},\dots, u^{(k)}, \ldots, \xi_{\sigma (n-1)}):u \in U\} \right)\setminus \{0\} .$$

 \item[(iii)]   For any $\emptyset \neq U \in {\mathcal P}(\mathbb  V^{*})$  and $\ov{\xi}_i \in
 \overline{{\mathcal B}}$, $i=1,\dots,n-1,$
 $$\hspace{-1cm}  F(U ,  \ov{\xi}_{1},\dots,\ov{\xi}_{n-1})=\left(
\bigcup_{  
\begin{array}{c}
k\in 	\{1,\dots,n\}\\  
\sigma \in {\mathbb S}_{n-1}
\end{array}
}  \{u \in \mathbb V:  f(\xi_{\sigma (1)},\dots, u^{(k)},\dots, \xi_{\sigma (n-1)})\in U\} \right)\setminus \{0\} .$$

 \item[(iv)]   $F(U ,  \xi_{1},\dots, \xi_{n-1})=\emptyset$, if there are  $i,j\in \{1,\dots,n-1\},\ i\neq j$, such that $\xi_{i}\in {\mathcal B}$, $\xi_{j}\in \overline{{\mathcal B}}$.
 \end{itemize}

\begin{remark}  \label{Fsym}
It is clear that $$F(U , \xi_{\sigma (1)},\dots, \xi_{\sigma (n-1)}) = F(U ,\xi_{1},\dots,\xi_{n-1}),$$ and $$F(U , \ov{\xi}_{\sigma (1)},\dots, \ov{\xi}_{\sigma (n-1)}) = F(U ,\ov{\xi}_{1},\dots,\ov{\xi}_{n-1}),$$ for all $\xi_{1},\dots,\xi_{n-1}\in 	{\mathcal B},\ \ov{\xi}_{1},\dots,\ov{\xi}_{n-1}\in \ov{{\mathcal B}},\ \sigma \in {\mathbb S}_{n-1}$.
\end{remark}

\begin{lemma}\label{lema1}
Concerning the mapping $F$ previously defined, we have
\begin{enumerate}
\item[1.] For any $v \in \mathbb V^{*}$ and $\xi_i \in {\mathcal B}\ i=1,\dots,n-1$,\\  $w \in F(\{v\} , \xi_{1},\dots,\xi_{n-1})$ if and only if $v \in F(\{w\},\ov{\xi}_{1},\dots,\ov{\xi}_{n-1})$.


\item[2.] For any $U \in \mathcal{P}(\mathbb V^{*})$ and  $\xi_{i}\in {\mathcal B}\dot\cup \overline{{\mathcal B}}, \ i=1,\dots,n-1$,\\ $v \in F(U ,\xi_{1},\dots,\xi_{n-1})$ if and only if 
$F(\{v\},\ov{\xi}_{1},\dots,\ov{\xi}_{n-1})\cap U \neq \emptyset $.
\end{enumerate}

\end{lemma}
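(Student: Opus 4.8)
The plan is to prove both items by directly unwinding the defining clauses (ii) and (iii) of $F$, since each reduces to matching a single existential condition of the shape ``there exist $k\in\{1,\dots,n\}$ and $\sigma\in\mathbb{S}_{n-1}$ with $w=f(\xi_{\sigma(1)},\dots,v^{(k)},\dots,\xi_{\sigma(n-1)})$''. For item 1, I would fix $v\in\mathbb V^{*}$ and $\xi_1,\dots,\xi_{n-1}\in\mathcal B$ and read off both sides. By clause (ii), $w\in F(\{v\},\xi_1,\dots,\xi_{n-1})$ holds precisely when $w\neq 0$ and there exist $k,\sigma$ with $w=f(\xi_{\sigma(1)},\dots,v^{(k)},\dots,\xi_{\sigma(n-1)})$; by clause (iii), $v\in F(\{w\},\ov\xi_1,\dots,\ov\xi_{n-1})$ holds precisely when $v\neq 0$ and there exist $k',\sigma'$ with $f(\xi_{\sigma'(1)},\dots,v^{(k')},\dots,\xi_{\sigma'(n-1)})\in\{w\}$, i.e.\ equal to $w$. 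These two conditions carry the same content, namely the equality $w=f(\dots,v^{(k)},\dots)$; the constraint $v\neq 0$ is supplied by the hypothesis $v\in\mathbb V^{*}$, while $w\neq 0$ comes from the deletion of $\{0\}$ in clause (ii) in one direction and from the requirement that the argument $\{w\}$ lie in $\mathcal P(\mathbb V^{*})$ in the other. Since clauses (ii) and (iii) already quantify over all $\sigma\in\mathbb{S}_{n-1}$, the same pair $(k,\sigma)$ serves in both directions, so Remark~\ref{Fsym} makes the ordering of the $\xi_i$ immaterial and the equivalence is immediate.

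For item 2, I would first record the elementary fact that $F$ distributes over unions in its first argument: for any homogeneous tuple $(\xi_1,\dots,\xi_{n-1})$ (all entries in $\mathcal B$, or all in $\ov{\mathcal B}$) and any $U\in\mathcal P(\mathbb V^{*})$ one has $F(U,\xi_1,\dots,\xi_{n-1})=\bigcup_{u\in U}F(\{u\},\xi_1,\dots,\xi_{n-1})$. This is read straight off the definition: in clause (ii) the union already ranges over $u\in U$, while in clause (iii) the preimage condition $f(\dots,u^{(k)},\dots)\in U$ splits as $\bigcup_{w\in U}\{u:f(\dots,u^{(k)},\dots)=w\}$, and deleting $\{0\}$ commutes with these unions (the empty union reproducing clause (i) when $U=\emptyset$).

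With this in hand I would split item 2 into three cases according to the tuple $(\xi_1,\dots,\xi_{n-1})$. If the tuple is mixed, then by clause (iv) both $F(U,\xi_1,\dots,\xi_{n-1})$ and $F(\{v\},\ov\xi_1,\dots,\ov\xi_{n-1})$ vanish (the bar operation preserves mixedness), so both sides of the claimed equivalence are false. If the tuple is homogeneous with $\xi_i\in\mathcal B$ for all $i$, distributivity gives $v\in F(U,\xi_1,\dots,\xi_{n-1})$ iff $v\in F(\{u\},\xi_1,\dots,\xi_{n-1})$ for some $u\in U$; applying item 1 with the roles of $v$ and $w$ exchanged rewrites the latter as $u\in F(\{v\},\ov\xi_1,\dots,\ov\xi_{n-1})$ for some $u\in U$, which is exactly $F(\{v\},\ov\xi_1,\dots,\ov\xi_{n-1})\cap U\neq\emptyset$. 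The case $\xi_i\in\ov{\mathcal B}$ for all $i$ is handled identically: writing $\xi_i=\ov\zeta_i$ with $\zeta_i\in\mathcal B$ and using the involutivity $\ov{(\ov e_i)}=e_i$, I invoke item 1 in the form ``$v\in F(\{u\},\ov\zeta_1,\dots,\ov\zeta_{n-1})$ iff $u\in F(\{v\},\zeta_1,\dots,\zeta_{n-1})$'' to conclude in the same way.

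I do not expect any genuine obstacle: the content of both items is bookkeeping rather than a new idea. The only points requiring care are (a) keeping the nonzero conditions straight, observing that every argument fed to $F$ lies in $\mathbb V^{*}$, so the deletion of $\{0\}$ together with the domain $\mathcal P(\mathbb V^{*})$ already forces $v,w\neq 0$ without additional hypotheses; and (b) organizing the case split in item 2 so that clause (iv) cleanly disposes of the mixed tuples while the involutivity of the bar operation passes between the two homogeneous cases.
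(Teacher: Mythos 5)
Your proof is correct and follows essentially the same route as the paper: item 1 by directly unwinding clauses (ii) and (iii) together with Remark~\ref{Fsym}, and item 2 by reducing $F(U,\cdot)$ to singletons $F(\{w\},\cdot)$, $w\in U$, and invoking item 1. Your explicit treatment of the mixed tuples via clause (iv) and of the case $\xi_i\in\overline{\mathcal B}$ only fills in details the paper's argument leaves implicit.
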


\begin{proof}
1. Let us start admitting that $w \in F(\{v\} , \xi_{1},\dots,\xi_{n-1})$, being $v \in  \mathbb  V^{*}$ and $\xi_i \in {\mathcal B},\ i=1,\dots,n-1$. This means that 
$$w=f(\xi_{\sigma (1)},\dots,v^{(k)}, \dots, \xi_{\sigma (n-1)}), $$
for some $k\in \{1, \dots ,n-1\}$ and $\sigma \in {\mathbb S}_{n-1}$, and thus 
$$v\in F(\{w\}, \ov{\xi}_{\sigma (1)},\dots, \ov{\xi}_{\sigma (n-1)}). $$
According to the previous remark, we have:
$$v\in F(\{w\}, \ov{\xi}_{1},\dots, \ov{\xi}_{n-1}). $$
The reciprocal result can be proved analogously.


2. Suppose that $U\in \mathcal{P}(\mathbb V^{*})$ and $\xi_{i}\in {\mathcal B}\dot\cup \overline{{\mathcal B}}, \ i=1,\dots,n-1.$ 
Let us first admit that $v \in F(U ,\xi_{1},\dots,\xi_{n-1})$. Then $v \in F(\{w\},\xi_{1},\dots,\xi_{n-1})$ for some $w\in U$. By item 1., this is equivalent to 
$w\in F(\{v\}, \ov{\xi}_{1},\dots, \ov{\xi}_{n-1})$ and thus 
$$w\in F(\{v\},\ov{\xi}_{1},\dots, \ov{\xi}_{n-1})\cap U \neq \emptyset .$$
The reciprocal assertion can be proved in a similar way.
\end{proof}

\begin{definition}\label{connection}\rm
Let $e_i, e_j \in {\mathcal B}$. We say that $e_i $ is {\it connected} to $e_j$ if either,

\begin{itemize}
\item[(i)] $e_i=e_j$ or

\item[(ii)]  there exists an ordered   list $(X_1,X_2,\dots,X_m)$, where $X_{i}=\left(a_{i1},\dots,a_{in-1} \right)$ such that $a_{ik} \in  {\mathcal B}\dot\cup \overline{{\mathcal B}} $, $i \in \{1,\dots,m\},\ k\in \{1,\dots,n-1\},$  satisfying:

\begin{enumerate}
\item [{\rm 1.}]
$F(\{e_i\} , X_1)\neq\emptyset$,\\
$F(F(\{e_i\} , X_1),X_{2})\neq\emptyset,\\
 \vdots\\
 F (\dots (F(F(\{e_i\} , X_1),X_{2}),\dots,X_{m-1})\neq\emptyset$.

\medskip

\item [{\rm 2.}]
$e_j\in F( F (\dots (F(F(\{e_i\} , X_1),X_{2}),\dots,X_{m-1}), X_{m}).$
\end{enumerate}

\end{itemize}

In this case we say that $(X_1,X_2,\dots,X_m)$ is a {\it connection} from $e_i$ to $e_j$.
\end{definition}

\begin{lemma}\label{lema3}
Let $(X_1, X_2, \dots, X_{m-1}, X_m)$ be any connection from $e_i$ to  $e_j$, where $e_i$ and $e_j$ are arbitrary elements in ${\mathcal B}$, with $e_i \neq e_j$. Then the ordered list  $(\overline{X}_m,\overline{X}_{m-1},\dots,\overline{X}_2,\overline{X}_1)$ is a connection from $e_j$ to $e_i$.
\end{lemma}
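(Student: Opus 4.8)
The plan is to run a backward induction along the connection, turning each forward application of $F$ into a backward one by invoking the second part of Lemma~\ref{lema1}. The first ingredient I would isolate is the monotonicity of $F$ in its set argument: in parts (ii) and (iii) of the definition of $F$, the value $F(U,\xi_{1},\dots,\xi_{n-1})$ is a union indexed by the elements $u\in U$, so $F(A,\xi_{1},\dots,\xi_{n-1})\subseteq F(B,\xi_{1},\dots,\xi_{n-1})$ whenever $A\subseteq B$. This elementary remark is what lets me promote statements about singletons to statements about whole intermediate sets.

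Next I would name the two chains of iterates. Setting $U_{0}=\{e_{i}\}$ and $U_{t}=F(U_{t-1},X_{t})$ for $t=1,\dots,m$, the hypothesis that $(X_{1},\dots,X_{m})$ is a connection from $e_{i}$ to $e_{j}$ reads: $U_{1},\dots,U_{m-1}$ are nonempty and $e_{j}\in U_{m}$. On the other side I would set $W_{0}=\{e_{j}\}$ and $W_{t}=F(W_{t-1},\overline{X}_{m-t+1})$ for $t=1,\dots,m$, so that proving the lemma amounts to showing that $W_{1},\dots,W_{m-1}$ are nonempty and $e_{i}\in W_{m}$.

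The heart of the argument is the inductive claim that for each $t\in\{1,\dots,m\}$ there is an element $v_{m-t}\in W_{t}\cap U_{m-t}$. For the base case $t=1$, from $e_{j}\in U_{m}=F(U_{m-1},X_{m})$ the second part of Lemma~\ref{lema1} gives $F(\{e_{j}\},\overline{X}_{m})\cap U_{m-1}\neq\emptyset$, that is $W_{1}\cap U_{m-1}\neq\emptyset$. For the inductive step, given $v_{m-t}\in W_{t}\cap U_{m-t}$ with $t<m$, I use $v_{m-t}\in U_{m-t}=F(U_{m-t-1},X_{m-t})$ together with Lemma~\ref{lema1} to produce an element $v_{m-t-1}\in F(\{v_{m-t}\},\overline{X}_{m-t})\cap U_{m-t-1}$; since $v_{m-t}\in W_{t}$, monotonicity yields $F(\{v_{m-t}\},\overline{X}_{m-t})\subseteq F(W_{t},\overline{X}_{m-t})=W_{t+1}$, whence $v_{m-t-1}\in W_{t+1}\cap U_{m-t-1}$ and in particular $W_{t+1}\neq\emptyset$. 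Carried out for $t=1,\dots,m-1$, this simultaneously delivers the nonemptiness of $W_{1},\dots,W_{m-1}$.

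Finally, feeding $t=m-1$ into the step produces $v_{1}\in W_{m-1}\cap U_{1}$; applying Lemma~\ref{lema1} to $v_{1}\in U_{1}=F(\{e_{i}\},X_{1})$ gives $e_{i}\in F(\{v_{1}\},\overline{X}_{1})$, and monotonicity again places this inside $W_{m}=F(W_{m-1},\overline{X}_{1})$, so $e_{i}\in W_{m}$, as required. The case $m=1$ is just the equivalence $e_{j}\in F(\{e_{i}\},X_{1})\iff e_{i}\in F(\{e_{j}\},\overline{X}_{1})$, with no intermediate conditions to check. I do not expect a genuine obstacle here: the only substantive tool is Lemma~\ref{lema1} plus monotonicity. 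The main point to get right is the purely clerical one of keeping the reversed, barred list correctly aligned against the forward chain, repeatedly using $\overline{(\overline{X}_{t})}=X_{t}$ so that the indices $m-t$ and $m-t+1$ match at every stage.
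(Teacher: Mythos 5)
Your proposal is correct and takes essentially the same route as the paper: the paper argues by induction on $m$, using part 2 of Lemma \ref{lema1} to peel off the last tuple $X_{m}$ and then invoking the inductive hypothesis on the truncated connection, which is precisely your backward descent along the chain written in unrolled form. The only substantive point you make explicit that the paper leaves implicit is the monotonicity of $F$ in its set argument, which is indeed what justifies placing $F(\{v\},\overline{X}_{t})$ inside $F(W,\overline{X}_{t})$ at each stage.
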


\begin{proof}
The proof will be done by induction on $m$. In the case $m=1$ we have that
$e_j\in F(\{e_i\} , X_1)=F(\{e_i\},a_{11},\dots,a_{1 n-1})$ implying that 
$$e_i\in F(\{e_j\} , \overline{a}_{11},\dots,\overline{a}_{1 n-1})=F(\{e_j\} , \overline{X}_1),$$
by 1. of Lemma \ref{lema1}. Thus $(\overline{X}_1)$ is a connection from $e_j$ to $e_i$.

\medskip

Admit now that the assertion holds for any connection with $m\geq 1$ elements,  and let us show this assertion also holds for any connection $$(X_1,X_2,\dots,X_m,X_{m+1})$$ 
with $m+1$ ($(n-1)$-tuples) elements. So, consider a connection \newline $(X_1,X_2,\dots,X_m,X_{m+1})$ from $e_i$ to $e_j$. Let us begin by setting
$$U:=F( F (\dots (F(F(\{e_i\} , X_1),X_{2}),\dots,X_{m-1}), X_{m}).$$ 
Applying 2. of Definition \ref{connection} we have that $e_j \in F(U , X_{m+1}).$ Then, by 2. of Lemma \ref{lema1},
$F(\{e_j\} , \overline{X}_{m+1})\cap U\neq\emptyset$. Admit that 
\begin{equation}\label{eqq1}
x\in F(\{e_j\} , \overline{X}_{m+1})\cap U\neq \emptyset.
\end{equation}
Since $x\in U$ we have that $(X_1, X_2, \dots, X_{m-1}, X_m)$ is
a connection from $e_i$ to $x$ with $m$ elements. Henceforth
$(\overline{X}_m,\overline{X}_{m-1},\dots,\overline{X}_2,\overline{X}_1)$
connects $x$ to  $e_i$. From here,  and by equation (\ref{eqq1}), we
obtain
$$e_i \in F(F(\dots (F(F(\{e_j\} , \overline{X}_{m+1}), \overline{X}_m),\dots , \overline{X}_{2}),\overline{X}_{1}),$$
which means that
$$(\overline{X}_{m+1},\overline{X}_{m},\dots,\overline{X}_2,\overline{X}_1)$$
connects  $e_j$ to $e_i$.
\end{proof}

\begin{proposition}\label{equi}
The relation $\sim$ in ${\mathcal B}$, defined by $e_i\sim e_j$ if and only if $e_i$ is connected to $e_j$, is an equivalence relation.
\end{proposition}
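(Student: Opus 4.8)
The plan is to verify the three defining properties of an equivalence relation --- reflexivity, symmetry and transitivity --- directly from Definition \ref{connection}, using Lemmas \ref{lema1} and \ref{lema3} as the main tools.

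Reflexivity is immediate, since clause (i) of Definition \ref{connection} declares every $e_i$ connected to itself, so $e_i \sim e_i$. Symmetry is essentially a restatement of Lemma \ref{lema3}: if $e_i \sim e_j$ and $e_i = e_j$ there is nothing to prove, while if $e_i \neq e_j$ there is a connection $(X_1,\dots,X_m)$ from $e_i$ to $e_j$, and Lemma \ref{lema3} produces the connection $(\overline{X}_m,\dots,\overline{X}_1)$ from $e_j$ to $e_i$, whence $e_j \sim e_i$.

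The only substantive work is transitivity, and here the key auxiliary fact I would first record is the monotonicity of $F$ in its set-variable: if $\emptyset \neq U \subseteq U'$ then $F(U,\xi_1,\dots,\xi_{n-1}) \subseteq F(U',\xi_1,\dots,\xi_{n-1})$ for every $(\xi_1,\dots,\xi_{n-1})$. This is immediate from clauses (ii)--(iv), since in each case $F(U,\cdot)$ is defined as a union indexed by the elements $u \in U$ (or is empty, independently of $U$). Granting this, suppose $(X_1,\dots,X_m)$ connects $e_i$ to $e_j$ and $(Y_1,\dots,Y_p)$ connects $e_j$ to $e_k$, the cases $e_i = e_j$ or $e_j = e_k$ being trivial. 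Writing $U := F(\cdots F(F(\{e_i\},X_1),X_2)\cdots,X_m)$, we have $e_j \in U$, hence $\{e_j\} \subseteq U$. Applying monotonicity repeatedly along $Y_1,\dots,Y_p$, each intermediate set obtained by feeding $U$ successively to $Y_1,\dots,Y_{p-1}$ contains the corresponding intermediate set obtained from $\{e_j\}$ --- which is nonempty by the second connection --- and is therefore itself nonempty; moreover $e_k$, which lies in $F(\cdots F(\{e_j\},Y_1)\cdots,Y_p)$, also lies in $F(\cdots F(U,Y_1)\cdots,Y_p)$. Combining this with the nonemptiness conditions already guaranteed by the first connection along $X_1,\dots,X_m$, the concatenated list $(X_1,\dots,X_m,Y_1,\dots,Y_p)$ satisfies both requirements of Definition \ref{connection} and is thus a connection from $e_i$ to $e_k$, giving $e_i \sim e_k$.

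The main obstacle is transitivity, and within it the one delicate point is precisely the monotonicity lemma: a connection is phrased in terms of a single marked element $e_j \in U$ rather than the whole set $U$, so to concatenate I must transfer the second connection, which starts from the singleton $\{e_j\}$, into the set-valued computation that starts from $U$. Once monotonicity is in hand the remainder is routine bookkeeping over the indices of the concatenated list.
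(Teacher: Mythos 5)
Your proof is correct and follows the same route as the paper's: reflexivity from clause (i) of Definition \ref{connection}, symmetry from Lemma \ref{lema3}, and transitivity by concatenating the two connections into $(X_1,\dots,X_m,Y_1,\dots,Y_p)$. The monotonicity of $F$ in its set argument, which you isolate in order to transfer the second connection from the singleton $\{e_j\}$ to the larger set $U$, is precisely the point the paper's proof leaves implicit, so your version is the same argument carried out more completely.
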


\begin{proof}
The relation $\sim$ is clearly reflexive (see (i) of  Definition \ref{connection}) and symmetric (see Lemma \ref{lema3}). Hence let us verify its transitivity.

Admit that $e_i, e_j, e_k \in {\mathcal B}$ are pairwise different such that $e_i \sim e_j$ and $e_j \sim e_k$ (the cases in which two among those elements are equal are trivial). Then there are  connections $(X_1,\dots,X_m)$ and $(Y_1,\dots,Y_p)$ from $e_i$ to $e_j$ and from $e_j$ to $e_k$, respectively. Therefore, $(X_1,\dots,X_m,Y_1,\dots,Y_p)$ is a connection from $e_i$ to $e_k$ showing the transitivity of $\sim$, and the result is proved.
\end{proof}

\smallskip

Henceforth, by the above defined equivalence relation, we introduce the quotient set
$${\mathcal B}/ \sim := \{[e_i] : e_i \in {\mathcal B}\},$$
 where $[e_i]$ stands for the set of elements in ${\mathcal B}$ which are connected to $e_i$.

 \medskip

For each  $[e_i]  \in {\mathcal B}/ \sim$ we may introduce the linear subspace 
$$V_{[e_i]}:= \bigoplus_{e_j \in [e_i] } {\mathbb F} e_j,$$
allowing us to write
\begin{equation}\label{elp3}
\mathbb V = \bigoplus\limits_{[e_i]\in {\mathcal B}/ \sim} V_{[e_i]}.
\end{equation}
Next we show that this is a decomposition of $\mathbb V$ in pairwise $f$-orthogonal subspaces.


\begin{lemma}\label{elp1}
For any $[e_i],[e_j] \in {\mathcal B}/ \sim$ with $[e_i] \neq [e_j]$, we have that 
\begin{equation}
f(\mathbb V,\dots,V_{[e_i]}^{\left(k_{1}\right)},\dots,V_{[e_j] }^{\left(k_{2}\right)},\dots,\mathbb V)=0, \label{f-orth_dec}
\end{equation}
for all $k_{1},k_{2}\in \{1,\dots,n\},\ k_{1} \neq k_{2}$.
\end{lemma}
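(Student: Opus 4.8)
The plan is to proceed by contradiction after reducing \eqref{f-orth_dec} to basis vectors. Since $f$ is $n$-linear and $V_{[e_i]}=\bigoplus_{e_l\in[e_i]}\mathbb F e_l$, the subspace on the left-hand side of \eqref{f-orth_dec} is spanned by the products $f(\eta_1,\dots,e_r^{(k_1)},\dots,e_s^{(k_2)},\dots,\eta_{n-2})$ with $e_r\in[e_i]$, $e_s\in[e_j]$, and $\eta_1,\dots,\eta_{n-2}\in\mathcal B$ occupying the remaining $n-2$ slots; hence it suffices to show every such product vanishes. So I assume, for contradiction, that $v:=f(\eta_1,\dots,e_r^{(k_1)},\dots,e_s^{(k_2)},\dots,\eta_{n-2})\neq 0$ for some such choice, and I aim to derive $e_r\sim e_s$, which contradicts $[e_i]\neq[e_j]$.

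The core of the argument is to read off an explicit two-step connection from $e_r$ to $e_s$ out of this single nonzero product. For the first $(n-1)$-tuple I take the unbarred list $X_1:=(e_s,\eta_1,\dots,\eta_{n-2})$: inserting $u=e_r$ into the $k_1$-th slot of $f$ and arranging $e_s,\eta_1,\dots,\eta_{n-2}$ in the remaining slots with $e_s$ at position $k_2$ shows, by case (ii) of the definition of $F$, that $v\in F(\{e_r\},X_1)$, so in particular $F(\{e_r\},X_1)\neq\emptyset$. For the second tuple I take the barred list $X_2:=(\overline{e}_r,\overline{\eta}_1,\dots,\overline{\eta}_{n-2})$ and set $W:=F(\{e_r\},X_1)$. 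By case (iii), $F(W,X_2)$ consists (after deleting $0$) of those $u$ for which some arrangement of $e_r,\eta_1,\dots,\eta_{n-2}$ together with $u$ lies in $W$; taking $u=e_s$ with $e_r$ at position $k_1$ gives precisely $f(\eta_1,\dots,e_r^{(k_1)},\dots,e_s^{(k_2)},\dots,\eta_{n-2})=v\in W$, so that $e_s\in F(W,X_2)$.

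These two facts are exactly conditions 1 and 2 of Definition \ref{connection} for the list $(X_1,X_2)$, so $(X_1,X_2)$ is a connection from $e_r$ to $e_s$; thus $e_r\sim e_s$ and $[e_i]=[e_j]$, the desired contradiction. I expect the only genuine obstacle to be the backward step. Because the products $f(e_{l_1},\dots,e_{l_n})$ are arbitrary vectors and almost never basis elements, one cannot connect forward directly to a basis vector; the device that makes a basis vector reappear inside an $F$-set is precisely the preimage (barred) map of case (iii), whose membership condition ``lies in $W$'' is satisfied here because $v$ itself already belongs to $W$. The remaining ingredients---the reduction by multilinearity, the irrelevance of the order of the arguments recorded in Remark \ref{Fsym}, and the bookkeeping of which slot carries which vector---are routine.
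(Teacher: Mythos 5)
Your proof is correct and follows essentially the same route as the paper: reduce by multilinearity to basis vectors, assume a nonzero product $v$, and extract the two-step connection $X_1=(e_s,\eta_1,\dots,\eta_{n-2})$, $X_2=(\overline{e}_r,\overline{\eta}_1,\dots,\overline{\eta}_{n-2})$ from $e_r$ to $e_s$ to contradict $[e_i]\neq[e_j]$. The only cosmetic difference is that you verify $e_s\in F(F(\{e_r\},X_1),X_2)$ directly from case (iii) of the definition of $F$, whereas the paper routes the same fact through item 1 of Lemma \ref{lema1}.
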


\begin{proof}
In order to prove (\ref{f-orth_dec}) it is sufficient to show that
\begin{equation*}
f(\xi_{\sigma (1)},\dots,V_{[e_i]}^{\left(k_{1}\right)},\dots,V_{[e_j] }^{\left(k_{2}\right)},\dots,\xi_{\sigma (n-2)})=0,
\end{equation*}
for any permutation $\sigma \in {\mathbb S}_{n-2}$, $ \xi_{1},\dots,\xi_{n-2} \in {\mathcal B}$. 
Admit the opposite assertion. Then there are $e_k \in [e_i]$, $e_p \in [e_j] $ and $v\in \mathbb V^{*}$ such that 
\begin{equation}
v=f(\xi_{\sigma (1)},\dots,e_{k}^{\left(k_{1}\right)},\dots,e_{p }^{\left(k_{2}\right)},\dots,\xi_{\sigma (n-2)}), \label{aux}
\end{equation}
for some $\sigma \in {\mathbb S}_{n-2}$. By definition of $F$, from (\ref{aux}) we may deduce two facts:
$$\text{ (i) } v\in F(\{e_{k}\},e_{p},\xi_{1},\dots,\xi_{n-2}),$$
$$\text{ (ii) }  v\in F(\{e_{p}\},e_{k},\xi_{1},\dots,\xi_{n-2}).$$
From (ii) and 1. of Lemma \ref{lema1}, we have 
$$\text{ (iii) }  e_{p}\in F(\{v\},\ov{e}_{k},\ov{\xi}_{1},\dots,\ov{\xi}_{n-2})$$
From (i) and (iii), we observe that $\left(X_{1},X_{2}\right)$, where
$$ X_{1}=\left(e_{p},\xi_{1},\dots,\xi_{n-2}\right) \text{ and }X_{2}=\left(\ov{e}_{k},\ov{\xi}_{1},\dots,\ov{\xi}_{n-2}\right),$$
is a connection from $e_{k}$ to $e_{p}$. Thus,  $[e_i]=[e_k]=[e_p]=[e_j]$, causing a contradiction.
\end{proof}

As a consequence of Lemma \ref{elp1} and equation (\ref{elp3}) we have.

\begin{proposition}\label{meta}
Given $\mathbb  V$ and $f$ as initially defined, $\mathbb V$ decomposes as  the $f$-orthogonal direct sum of linear subspaces $$\mathbb V = \bigoplus\limits_{[e_i]\in {\mathcal B}/ \sim}V_{[e_i]}.$$
\end{proposition}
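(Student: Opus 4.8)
The plan is to deduce the statement directly by assembling the two facts already in hand: equation~(\ref{elp3}), which exhibits the candidate decomposition, and Lemma~\ref{elp1}, which supplies the orthogonality. No new construction is needed; the argument is really a matter of checking that these two ingredients meet the two clauses in the definition of an $f$-orthogonal direct sum.

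First I would justify that (\ref{elp3}) is a genuine internal direct sum decomposition of $\mathbb V$. By Proposition~\ref{equi} the relation $\sim$ is an equivalence relation on $\mathcal B$, so its classes $[e_i]$ partition the basis $\mathcal B$. Setting $V_{[e_i]} = \bigoplus_{e_j \in [e_i]} \mathbb F e_j$, distinct classes span subspaces that meet only in $\{0\}$ (their spanning sets are disjoint subsets of the linearly independent set $\mathcal B$), and together these subspaces span all of $\mathbb V$ (their spanning sets exhaust $\mathcal B$). Hence $\mathbb V = \bigoplus_{[e_i]\in \mathcal B/\sim} V_{[e_i]}$ as asserted, and since each nonempty class yields a nonzero summand, the decomposition is nontrivial.

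Next I would verify the orthogonality clause. By the definition of an $f$-orthogonal decomposition, it suffices that $V_{[e_i]}$ and $V_{[e_j]}$ be $f$-orthogonal for all $[e_i] \neq [e_j]$, that is, that
$$f(\mathbb V,\dots,V_{[e_i]}^{(k_1)},\dots,V_{[e_j]}^{(k_2)},\dots,\mathbb V)=0$$
for every pair of distinct slots $k_1 \neq k_2$. This is verbatim the conclusion of Lemma~\ref{elp1}, so the orthogonality holds with no further argument, and combining the two verifications completes the proof.

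I do not expect a genuine obstacle at this stage, since the substantive content has already been absorbed into Lemma~\ref{elp1} and, upstream, into the equivalence-relation machinery of Lemmas~\ref{lema1} and~\ref{lema3} together with Proposition~\ref{equi}. The only point deserving a line of care is the bookkeeping of the index conventions: Lemma~\ref{elp1} pins the two distinguished arguments in positions $k_1$ and $k_2$ while ranging over all $\mathcal B$-entries in the remaining slots, and one should observe that this matches exactly the symmetric orthogonality condition of the definition, so that interchanging the roles of $[e_i]$ and $[e_j]$ requires nothing new.
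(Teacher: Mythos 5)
Your proposal is correct and follows exactly the paper's route: the paper also obtains Proposition~\ref{meta} as an immediate consequence of the direct sum decomposition in equation~(\ref{elp3}) together with the $f$-orthogonality supplied by Lemma~\ref{elp1}. Your write-up merely makes explicit the routine verification (classes partition $\mathcal B$, hence the sum is direct) that the paper leaves implicit.
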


\smallskip

The family of linear subspaces of $\mathbb V$ formed by all of the $V_{[e_i]}$, $[e_i]\in {\mathcal B}/ \sim$, 
which gives rise to the decomposition in Proposition \ref{meta},  is not  good enough for our purposes. So we need to introduce a new equivalence relation on this family,  as follows.

\medskip

We begin by  observing that the above mentioned decomposition of $\mathbb V$ 
allows us to consider, for each $V_{[e_i]}$,  the projection map $$\Pi_{V_{[e_i]}}: \mathbb V \to V_{[e_i]}.$$

Also, let us  consider these family of nonzero  linear subspaces of $\mathbb V$,
\begin{equation*}
\hbox{${\mathcal{F}}:=\{V_{[e_i]}:[e_i]\in {\mathcal B}/ \sim  \}.$}
\end{equation*}%

\begin{definition}\label{ane3}\rm

 We  will say that $V_{[e_i]} \approx V_{[e_j]}$ if and only if either $V_{[e_i]} = V_{[e_j]}$ or there exists a  subset
\begin{equation*}
\{[\xi_1],[\xi_2], \ldots,[\xi_m]\} \subset {\mathcal B}/ \sim,
\end{equation*}
 such that

\begin{itemize}
\item[(i)] $[\xi_1]=[e_i]$ and $[\xi_m]=[e_j].$

\item[(ii)] 
{\scriptsize 
$$\hspace{-0.75cm} \sum\limits_{1 \leq k_1<k_2\leq n}  \left[\Pi_{V_{[\xi_1]}}(f(\mathbb V, \ldots, V_{[\xi_2]}^{(k_1)}, \ldots, V_{[\xi_2]}^{(k_2)}, \ldots, \mathbb V)) + \Pi_{V_{[\xi_2]}}(f(\mathbb V, \ldots, V_{[\xi_1]}^{(k_1)}, \ldots,  V_{[\xi_1]}^{(k_2)},  \ldots, \mathbb V)) \right]\neq 0.$$
$$\hspace{-0.75cm}  \sum\limits_{1 \leq k_1<k_2\leq n}  \left[\Pi_{V_{[\xi_2]}}(f(\mathbb V, \ldots, V_{[\xi_3]}^{(k_1)}, \ldots, V_{[\xi_3]}^{(k_2)}, \ldots, \mathbb V)) + \Pi_{V_{[\xi_3]}}(f(\mathbb V, \ldots, V_{[\xi_2]}^{(k_1)}, \ldots,  V_{[\xi_2]}^{(k_2)},  \ldots, \mathbb V)) \right]\neq 0.$$
$\vdots$\\
$$\hspace{-0.75cm}  \sum\limits_{1 \leq k_1<k_2\leq n}  \left[\Pi_{V_{[\xi_{m-1}]}}(f(\mathbb V, \ldots, V_{[\xi_m]}^{(k_1)}, \ldots, V_{[\xi_m]}^{(k_2)}, \ldots, \mathbb V)) + \Pi_{V_{[\xi_m]}}(f(\mathbb V, \ldots, V_{[\xi_{m-1}]}^{(k_1)}, \ldots,  V_{[\xi_{m-1}]}^{(k_2)},  \ldots, \mathbb V)) \right]\neq 0.$$}

\end{itemize}
\end{definition}

\medskip

Clearly $\approx$ is an equivalence relation on ${\mathcal{F}}$ and so we
can introduce the quotient set
\begin{equation*}
{\mathcal{F}} / \approx :=\{ [V_{[e_i]}]: V_{[e_i]}
\in {\mathcal{F}}\}.
\end{equation*}
For each $[V_{[e_i]}] \in {\mathcal{F}} / \approx$,  we denote by $%
\overbrace{V_{[e_i]}}$ the linear subspace of $\mathbb V$
\begin{equation*}
\overbrace{V_{[e_i]}}:= \bigoplus\limits_{V_{[e_j]} \in [V_{[e_i]}]}   V_{[e_j]}.
\end{equation*}

By equation (\ref{elp3}) and  the definition of $\approx$, we clearly have

\begin{equation}\label{hel1}
\mathbb V=\bigoplus\limits_{[V_{[e_i]}] \in {\mathcal{F}} / \approx} \overbrace{V_{[e_i]}}.
\end{equation}
Also, we can assert by Lemma \ref{elp1} that

$$f(\mathbb V, \ldots, {{\overbrace{V_{[e_i]}}}}^{(k_1)}, \ldots,  {{\overbrace{V_{[e_j]}}}}^{(k_2)} \ldots, \mathbb V)=0$$
when $[V_{[e_i]}] \neq [V_{[e_j
]}]$ in ${\mathcal{F}} / \approx$, for all $k_1,k_2\in \left\{1,\dots,n \right\}$, $k_1 \neq k_2$.




\begin{proposition}\label{lema_submodulo}
 For any $[V_{[e_i]}] \in {\mathcal{F}} / \approx$,  $\overbrace{V_{[e_i]}}$ is a strongly $f$-invariant linear subspace  of $\mathbb V$.
\end{proposition}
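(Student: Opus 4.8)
The plan is to verify the absorption property slot by slot: I will show that for each fixed $k\in\{1,\dots,n\}$ one has $f(\mathbb V,\dots,{\overbrace{V_{[e_i]}}}^{(k)},\dots,\mathbb V)\subset \overbrace{V_{[e_i]}}$. Since $\overbrace{V_{[e_i]}}=\bigoplus_{V_{[e_j]}\in[V_{[e_i]}]}V_{[e_j]}$ and $f$ is $n$-linear, it suffices by linearity to fix a single summand $V_{[e_j]}\subset\overbrace{V_{[e_i]}}$ and prove $f(\mathbb V,\dots,V_{[e_j]}^{(k)},\dots,\mathbb V)\subset\overbrace{V_{[e_i]}}$. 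Note that $V_{[e_j]}\subset\overbrace{V_{[e_i]}}$ means $V_{[e_j]}\approx V_{[e_i]}$, hence $\overbrace{V_{[e_j]}}=\overbrace{V_{[e_i]}}$, so the target is equally $\overbrace{V_{[e_j]}}$.

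First I would collapse the remaining slots. Expanding each of the $n-1$ copies of $\mathbb V$ via the decomposition (\ref{elp3}) and using multilinearity, the left-hand side becomes a sum of terms $f(V_{[e_{l_1}]},\dots,V_{[e_j]}^{(k)},\dots,V_{[e_{l_{n-1}}]})$ over all choices of $\sim$-classes for the slots other than $k$. By Lemma \ref{elp1}, any term carrying two slots in distinct $\sim$-classes vanishes; since the $k$-th slot is fixed in $[e_j]$, every surviving term must have all of its slots in $[e_j]$. This yields the key reduction
$$f(\mathbb V,\dots,V_{[e_j]}^{(k)},\dots,\mathbb V)=f(V_{[e_j]},\dots,V_{[e_j]}).$$

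Next I would localize an arbitrary $w\in f(V_{[e_j]},\dots,V_{[e_j]})$ along (\ref{elp3}), writing $w=\sum_{[e_l]}\Pi_{V_{[e_l]}}(w)$, and argue that every nonzero component lies in $\overbrace{V_{[e_j]}}$. Suppose $\Pi_{V_{[e_l]}}(w)\neq 0$ with $[e_l]\neq[e_j]$. Applying the same orthogonality collapse to the expressions defining $\approx$, each term $\Pi_{V_{[e_l]}}(f(\mathbb V,\dots,V_{[e_j]}^{(k_1)},\dots,V_{[e_j]}^{(k_2)},\dots,\mathbb V))$ in Definition \ref{ane3}(ii) simplifies to $\Pi_{V_{[e_l]}}(f(V_{[e_j]},\dots,V_{[e_j]}))$, which contains $\Pi_{V_{[e_l]}}(w)\neq 0$. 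Hence the relevant sum in Definition \ref{ane3}(ii) is nonzero for the pair $([e_j],[e_l])$, so $V_{[e_j]}\approx V_{[e_l]}$ and therefore $V_{[e_l]}\subset\overbrace{V_{[e_j]}}=\overbrace{V_{[e_i]}}$. Thus each nonzero component of $w$ lies in $\overbrace{V_{[e_i]}}$, giving $w\in\overbrace{V_{[e_i]}}$; summing over the summands $V_{[e_j]}$ of $\overbrace{V_{[e_i]}}$ completes the argument.

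The only delicate point is the bookkeeping that matches the all-same-class products arising from the orthogonality reduction with the precise two-slot expressions appearing in Definition \ref{ane3}(ii): I expect the main care to be in checking that those defining sums genuinely collapse to $\Pi_{V_{[e_l]}}(f(V_{[e_j]},\dots,V_{[e_j]}))$, so that a nonzero projection of $w$ really does trigger the relation $\approx$. Everything else is routine multilinearity combined with the two orthogonality facts already established (Lemma \ref{elp1} and its stated consequence for the subspaces $\overbrace{V_{[e_i]}}$).
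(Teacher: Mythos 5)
Your argument is correct and follows essentially the same route as the paper: both rest on Lemma \ref{elp1} to annihilate every term with two slots in distinct $\sim$-classes and then on the projection condition of Definition \ref{ane3}(ii) to force any class receiving a nonzero component into the $\approx$-class of $V_{[e_i]}$. The only (harmless) difference is organizational: you collapse all $n-1$ free slots onto a single $V_{[e_j]}$ and prove the one-slot absorption directly, whereas the paper first establishes the two-slot inclusion (\ref{panzer5}) and then deduces the one-slot statement from it via orthogonality.
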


\begin{proof}
We begin by proving that
\begin{equation}\label{panzer5}
 f(\mathbb V, \ldots, {\overbrace{V_{[e_i]}}}^{(k_1)}, \ldots, {\overbrace{V_{[e_i]}}}^{(k_2)}, \ldots, \mathbb  V) \subset \overbrace{V_{[e_i]}}.
 \end{equation}
 Indeed, in case some
$0 \neq w \in f(\mathbb  V, \ldots, {\overbrace{V_{[e_i]}}}^{(k_1)}, \ldots, {\overbrace{V_{[e_i]}}}^{(k_2)}, \ldots, \mathbb V)$, 
decomposition (\ref{hel1}) allows us to write $$w=w_1+w_2+ \dots +w_m$$  for some  $0\neq w_j \in \overbrace{V_{[\xi_j]}}$ for $j=1, \ldots ,m$ and $\xi_j \in \mathcal B$. 
Observe now that Lemma \ref{elp1} gives us that  there  exist nonzero 
$x,y \in V_{[e_k]}$ with $V_{[e_k]} \subset \overbrace{V_{[e_i]}}$ and $z_1, \ldots z_{n-2} \in \mathbb V,$ 
such that
\begin{equation}\label{boli}
0\neq w=f(z_1, \ldots, x^{(k_1)},\ldots, y^{(k_2)}, \ldots, z_{n-2})
\end{equation}
Let us   consider  $0 \neq w_1 \in \overbrace{V_{[\xi_1]}}$, being so $w_1 \in V_{[e_r]}$
for some $V_{[e_r]} \subset \overbrace{V_{[\xi_1]}}$. By equation (\ref{boli}) we have
 $$\Pi_{V_{[e_r]}}(f(z_1, \ldots, x^{(k_1)},\ldots, y^{(k_2)}, \ldots, z_{n-2}))=w_1 \neq 0.$$ 
 That is
$$\Pi_{V_{[e_r]}}(f(\mathbb V, \ldots, V_{[e_k]}^{(k_1)},\ldots, V_{[e_k]}^{(k_2)}, \ldots, \mathbb V )) \neq 0$$ 
and we get that the set $\{[e_k],[e_r]\}$ gives us $V_{[e_k]} \approx V_{[e_r] }$. Hence
$$V_{[e_i]} \approx V_{[e_k]} \approx V_{[e_r] } \approx
 V_{[\xi_1] }$$ and we conclude
$V_{[\xi_1] } \subset \overbrace{V_{[e_i]}}$. From here  $w_1 \in \overbrace{V_{[e_i]}}$. In a similar way we get that any $w_j \in \overbrace{V_{[e_i]}}$ for $j=2,...,m$ and so  $w  \in \overbrace{V_{[e_i]}}$. Consequently, the inclusion (\ref{panzer5}) holds,  as desired.

\medskip

Finally,  by decomposition (\ref{hel1}), 
Lemma \ref{elp1} and equation (\ref{panzer5}), we have the following inclusion
$$\sum\limits_{j=1}^{n}f(\mathbb V, \ldots, {\overbrace{V_{[e_i]}}}^{(j)}, \ldots, \mathbb V) \subset \overbrace{V_{[e_i]}},$$
and thus $f(\mathbb V, \ldots, {\overbrace{V_{[e_i]}}}^{(j)}, \ldots, \mathbb V) \subset \overbrace{V_{[e_i]}}$ for all $j\in\{1,\dots,n\}.$
\end{proof}

\begin{theorem}\label{theo1}
Let $\mathbb V$ be a linear space  equipped with an $n$-linear map \newline $f: \mathbb V \times \ldots \times \mathbb V \to \mathbb V$.  For any basis  ${\mathcal B} =\{e_i: i \in I\}$ of $\mathbb V$ we have that $\mathbb V$ decomposes as the $f$-orthogonal direct sum of strongly $f$-invariant linear subspaces
$$\mathbb V=\bigoplus\limits_{[V_{[e_i]}] \in {\mathcal{F}} / \approx} \overbrace{V_{[e_i]}}.$$
\end{theorem}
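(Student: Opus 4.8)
The plan is to assemble three structural facts that have already been secured for the family $\{\overbrace{V_{[e_i]}} : [V_{[e_i]}] \in {\mathcal{F}}/\approx\}$, since the statement is exactly the conjunction of a direct sum decomposition, pairwise $f$-orthogonality, and strong $f$-invariance of the summands. First I would invoke equation (\ref{hel1}), which already records that
$$\mathbb V = \bigoplus\limits_{[V_{[e_i]}] \in {\mathcal{F}}/\approx} \overbrace{V_{[e_i]}}$$
is a genuine direct sum. This is immediate from the decomposition (\ref{elp3}) of Proposition \ref{meta} together with the fact that $\approx$ partitions ${\mathcal B}/\sim$, so the $\overbrace{V_{[e_i]}}$ merely regroup the $V_{[e_j]}$ into disjoint blocks whose internal sums recover all of $\mathbb V$.

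Next I would establish $f$-orthogonality. For $[V_{[e_i]}] \neq [V_{[e_j]}]$ in ${\mathcal{F}}/\approx$, any $V_{[e_p]} \subset \overbrace{V_{[e_i]}}$ and $V_{[e_q]} \subset \overbrace{V_{[e_j]}}$ must lie in distinct classes of ${\mathcal B}/\sim$ (otherwise a common $V_{[e_p]}$ would force $V_{[e_i]} \approx V_{[e_j]}$ by transitivity of $\approx$), so Lemma \ref{elp1} gives $f(\mathbb V,\dots,V_{[e_p]}^{(k_1)},\dots,V_{[e_q]}^{(k_2)},\dots,\mathbb V)=0$ whenever $k_1\neq k_2$. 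Since every element of $\overbrace{V_{[e_i]}}$ (resp. $\overbrace{V_{[e_j]}}$) is a finite linear combination of vectors drawn from such $V_{[e_p]}$ (resp. $V_{[e_q]}$), $n$-linearity expands each evaluation into a finite sum of terms, all of which vanish, yielding
$$f(\mathbb V,\dots,{\overbrace{V_{[e_i]}}}^{(k_1)},\dots,{\overbrace{V_{[e_j]}}}^{(k_2)},\dots,\mathbb V)=0,$$
which is precisely the $f$-orthogonality recorded just after (\ref{hel1}). Finally, the strong $f$-invariance of each summand is exactly the content of Proposition \ref{lema_submodulo}: for every $[V_{[e_i]}]$ and each $j\in\{1,\dots,n\}$ one has $f(\mathbb V,\dots,{\overbrace{V_{[e_i]}}}^{(j)},\dots,\mathbb V)\subset \overbrace{V_{[e_i]}}$. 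Collecting these three items gives the asserted decomposition.

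In this light the theorem carries no independent difficulty; all the substantive work sits upstream. The genuine obstacle is Proposition \ref{lema_submodulo}, where one must verify that enlarging the orthogonal blocks $V_{[e_i]}$ via $\approx$ does not destroy invariance. The delicate point is that a product $f(\dots,x^{(k_1)},\dots,y^{(k_2)},\dots)$ with $x,y$ in a single $V_{[e_k]}$ may have nonzero projection onto several distinct $V_{[e_r]}$, and one must argue — through the defining condition (ii) of Definition \ref{ane3} — that each such $V_{[e_r]}$ is forced into the same $\approx$-class as $V_{[e_k]}$, hence absorbed into $\overbrace{V_{[e_i]}}$. Once that projection argument is in hand, the present theorem follows merely by gathering the pieces already established.
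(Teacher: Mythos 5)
Your proof is correct and follows essentially the same route as the paper: the paper's own argument likewise just assembles the direct sum from equation (\ref{hel1}), the $f$-orthogonality from Lemma \ref{elp1}, and the strong $f$-invariance from Proposition \ref{lema_submodulo}. Your added remarks (expanding the orthogonality via $n$-linearity, and locating the real work in Proposition \ref{lema_submodulo}) only make explicit what the paper leaves implicit.
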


\begin{proof}
Consider the decomposition, as direct sum of linear subspaces $$\mathbb V=\bigoplus\limits_{[V_{[e_i]}] \in {\mathcal{F}} / \approx} \overbrace{V_{[e_i]}},$$ given by equation (\ref{hel1}). Now  Lemma \ref{elp1} shows that this decomposition is $f$-orthogonal and Proposition \ref{lema_submodulo}   that all of the linear subspaces  $\overbrace{V_{[e_i]}}$ are strongly $f$-invariant.
\end{proof}

\section{On the relation among  the decompositions given by different choices  of bases}

Observe that the decomposition of $\mathbb V$  as an $f$-orthogonal direct sum of strongly $f$-invariant linear subspaces given by Theorem \ref{theo1} is related with the initial choice of the basis. Indeed, as it was exemplified in \cite{Yo4}, for $n=2$, two different bases of $\mathbb V$
may lead to two different of those decompositions of $\mathbb V$.  The same happens in the $n$-ary case, with $n>2$, as shown in the following example.

Let $\mathbb V$ be the  ${\mathbb R}$-linear space $\mathbb V:={\mathbb R}^4$ equipped with  the $n$-linear map $f:{\mathbb R}^4 \times \dots \times {\mathbb R}^4 \to {\mathbb R}^4$ defined as
$$f(\overline{x}_1,\dots,\overline{x}_n)=(x_{11}x_{21},x_{11}x_{21}, 0,0),$$
where
$$\overline{x}_i=(x_{i1},\dots,x_{i4})$$
for each $i\in \{1,\dots,n\}.$

Let us consider the following two bases of ${\mathbb R}^4$:
$$ {\mathcal B}:=\{e_1,\dots,e_4\}, $$
that is, the canonical basis, and 
$${\mathcal B}^{\prime}:=\{(1,0,1,0), (1,0,-1,0),e_2, e_4\}.$$

Then it is possible to observe that the decomposition of $\mathbb V={\mathbb R}^4$, given in Theorem \ref{theo1} with respect to the  basis ${\mathcal B}$ is given by
$${\mathbb R}^4=({\mathbb R}e_1 \oplus {\mathbb R}e_2) \bigoplus ({\mathbb R}e_3) \bigoplus ({\mathbb R}e_4).$$

However, the same kind of decomposition with respect to ${\mathcal B}^{\prime}$ is given by 
$${\mathbb R}^4=({\mathbb R}(1,0,1,0) \oplus {\mathbb R}(1,0,-1,0) \oplus {\mathbb R} e_2)\bigoplus (  {\mathbb R}e_4).$$

Thus, it will be an interesting task to find a sufficient condition for two different decompositions of a linear space $\mathbb V$, induced by an $n$-linear map $f$ and with respect to two  different bases of $\mathbb V$, being isomorphic.  The following notion will help us in this purpose. 

\begin{definition}\rm
Let $\mathbb V$ be a linear space  equipped with an $n$-linear map $f: \mathbb V \times \dots \times \mathbb V \to \mathbb V$ and consider 
$$ \Gamma:=\ \mathbb V=\bigoplus\limits_{i \in I} V_i \text{ and } \Gamma^{\prime}:=\ \mathbb V=\bigoplus\limits_{j \in J} W_j$$
two decompositions of $\mathbb V$ as an $f$-orthogonal direct sum of strongly $f$-invariant linear subspaces. It is said  that $\Gamma$ and $\Gamma^{\prime}$ are {\it isomorphic} if there exists a linear  isomorphism $g:\mathbb V \to \mathbb V$ satisfying 
$$f(g(v_1),\dots g(v_n))=g(f(v_1,\dots , v_n))$$ 
for any $v_1,\dots , v_n \in \mathbb V$, and a bijection $\sigma: I \to J$ such that 
$$g(V_i)=W_{\sigma(i)}$$ 
for any $i \in I$.
\end{definition}

\begin{lemma}\label{genesis}
Let $\mathbb V$ be a linear space  equipped with an $n$-linear map \newline $f: \mathbb V \times \dots \times \mathbb V \to \mathbb V$ and consider  ${\mathcal B}=\{e_i: i\in I\}$ a  fixed  basis of $\mathbb V$.  Let also  $g:\mathbb V \to \mathbb V$ be a linear isomorphism satisfying
$$f\left(g\left(\xi_1\right),\dots,g\left(\xi_{n}\right)\right)=g\left(f\left(\xi_1,\dots,\xi_{n}\right)\right)$$ 
for any $\xi_i \in \mathcal B$. Then for any $U \in {\mathcal P}( \mathbb V^{*})$ and $\xi_k \in \mathcal B,\  k \in I$, the following assertions hold:

\begin{itemize}
\item[(i)] $g\left(F\left(U,\xi_1,\dots, \xi_{n-1}\right)\right)
=F\left(g(U),g\left( \xi_1 \right),\dots,g\left( \xi_{n-1}\right)\right)$,

\item[(ii)]  $g\left(F\left(U,\ov{\xi}_1,\dots,\ov{\xi}_{n-1}\right)\right)
=F\left(g(U),\ov{g\left({\xi}_1\right)},\dots,\ov{g\left({\xi}_{n-1}\right)}\right)$,

\end{itemize}
where $F$ is the mapping defined  by equation (\ref{gene}).
\end{lemma}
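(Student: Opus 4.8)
The plan is to reduce both identities to a single intertwining relation between $f$ and $g$, which must first be upgraded from basis vectors to arbitrary vectors. Concretely, I would begin by observing that, since $f$ is $n$-linear and $g$ is linear, writing each $v_i \in \mathbb V$ as a finite combination of the $\xi_j \in {\mathcal B}$ and expanding both sides by multilinearity shows that the hypothesis $f(g(\xi_1),\dots,g(\xi_n))=g(f(\xi_1,\dots,\xi_n))$ on ${\mathcal B}$ propagates to
$$f(g(v_1),\dots,g(v_n))=g(f(v_1,\dots,v_n)) \quad \text{for all } v_1,\dots,v_n \in \mathbb V.$$
I would also record two elementary facts about $g$ as a linear isomorphism: it commutes with arbitrary unions of subsets of $\mathbb V$, and since $g(0)=0$ and $g$ is injective we have $g(A\setminus\{0\})=g(A)\setminus\{0\}$ for every $A \subset \mathbb V$. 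These let $g$ pass freely through the set-theoretic operations built into the definition of $F$.

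For part (i), I would simply unwind clause (ii) of the definition of $F$ (equation \eqref{gene}). Applying $g$ to
$$F(U,\xi_1,\dots,\xi_{n-1})=\Big(\bigcup_{k,\sigma}\{f(\xi_{\sigma(1)},\dots,u^{(k)},\dots,\xi_{\sigma(n-1)}):u\in U\}\Big)\setminus\{0\}$$
and pushing $g$ through the union and the removal of $0$, the extended intertwining relation turns each generator into $f(g(\xi_{\sigma(1)}),\dots,g(u)^{(k)},\dots,g(\xi_{\sigma(n-1)}))$. Re-indexing by the substitution $w=g(u)$, and using that $u$ ranges over $U$ exactly when $w$ ranges over $g(U)$ (bijectivity of $g$), the resulting set is precisely clause (ii) applied to $g(U)$ and the vectors $g(\xi_1),\dots,g(\xi_{n-1})$, that is, $F(g(U),g(\xi_1),\dots,g(\xi_{n-1}))$.

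For part (ii), I would instead unwind clause (iii), which now describes a preimage rather than an image:
$$F(U,\ov{\xi}_1,\dots,\ov{\xi}_{n-1})=\Big(\bigcup_{k,\sigma}\{u\in\mathbb V:f(\xi_{\sigma(1)},\dots,u^{(k)},\dots,\xi_{\sigma(n-1)})\in U\}\Big)\setminus\{0\}.$$
The key equivalence is that, for $w=g(u)$, the extended relation gives $f(g(\xi_{\sigma(1)}),\dots,w^{(k)},\dots,g(\xi_{\sigma(n-1)}))=g\big(f(\xi_{\sigma(1)},\dots,u^{(k)},\dots,\xi_{\sigma(n-1)})\big)$, so this element lies in $g(U)$ if and only if $f(\xi_{\sigma(1)},\dots,u^{(k)},\dots,\xi_{\sigma(n-1)})\in U$, the last step using that $g$ is injective. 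Hence $g$ carries the defining preimage set for $F(U,\ov{\xi}_1,\dots,\ov{\xi}_{n-1})$ bijectively onto the defining preimage set for $F(g(U),\ov{g(\xi_1)},\dots,\ov{g(\xi_{n-1})})$, and again passing through the union and the removal of $0$ yields the claimed equality.

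I do not expect a genuine obstacle here: both identities are purely formal consequences of the intertwining relation together with the bijectivity of $g$. The only point requiring a little care is the barred case (ii), where $F$ is defined by a \emph{preimage} condition; there the membership test ``$\in U$'' transforms into ``$\in g(U)$'' precisely because $g$ is a bijection, so it is essential to use both injectivity and surjectivity of $g$ rather than mere linearity. The degenerate clauses (i) and (iv) of the definition of $F$ play no role, since in both parts of the lemma the entries are simultaneously unbarred or simultaneously barred, so only clauses (ii) and (iii) are ever invoked.
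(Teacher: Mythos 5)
Your proposal is correct and follows essentially the same route as the paper: unwind clauses (ii) and (iii) of the definition of $F$, push $g$ through the unions and the removal of $0$, and apply the intertwining relation (with the reparametrization $w=g(u)$, which the paper phrases via $g^{-1}$). Your explicit preliminary step of extending the intertwining identity from basis tuples to arbitrary vectors by multilinearity is a point the paper uses silently (its computation applies the relation with an arbitrary $u\in U$ in one slot), so making it explicit is a small improvement in rigor rather than a different argument.
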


\begin{proof}

(i) We have 
$$\hspace{-1cm}  g\left(F\left(U, \xi_1,\dots,\xi_{n-1}\right)\right)
=\left( \bigcup_{  
\begin{array}{c}
k\in 	\{1,\dots,n\}\\  
\sigma \in {\mathbb S}_{n-1}
\end{array}
} \left\{g\left(f(\xi_{\sigma (1)},\dots, u^{(k)}, \ldots, \xi_{\sigma (n-1)})\right):u \in U\right\} \right)\setminus \{0\} $$
$$\hspace{-1cm}  =\left( \bigcup_{  
\begin{array}{c}
k\in 	\{1,\dots,n\}\\  
\sigma \in {\mathbb S}_{n-1}
\end{array}
} \left\{f \left(g\left(\xi_{\sigma (1)}\right),\dots, g(u)^{(k)}, \ldots, g\left(\xi_{\sigma (n-1)}\right)\right):u \in U\right\} \right)\setminus \{0\} $$
$$=F\left(g(U),g\left( \xi_1\right),\dots,g\left( \xi_{n-1}\right)\right).$$

(ii) In this case we have 

$$\hspace{-1cm} g\left(F\left(U,\ov{\xi}_1,\dots,\ov{\xi}_{n-1}\right)\right)$$
$$\hspace{-1cm}=\left( \bigcup_{  
\begin{array}{c}
k\in 	\{1,\dots,n\}\\  
\sigma \in {\mathbb S}_{n-1}
\end{array}
} \left\{u\in \mathbb V : f\left(\xi_{\sigma (1)},\dots, (g^{-1}(u))^{(k)}, \ldots, \xi_{\sigma (n-1)}\right) \in U\right\} \right)\setminus \{0\} $$
$$ \hspace{-1cm} =\left( \bigcup_{  
\begin{array}{c}
k\in 	\{1,\dots,n\}\\  
\sigma \in {\mathbb S}_{n-1}
\end{array}
} \left\{u\in \mathbb V : f \left(g\left( \xi_{\sigma (1)}\right),\dots, u^{(k)}, \ldots, g\left(\xi_{\sigma (n-1)}\right)\right) \in g(U)\right\} \right)\setminus \{0\} $$
$$=F\left(g(U),\ov{g\left({\xi}_1\right)},\dots,\ov{g\left({\xi}_{n-1}\right)}\right).$$

Observe that in both cases we took into account Remark \ref{Fsym}.
\end{proof}

\begin{proposition}\label{188}

Let $\mathbb V$ be a linear space  equipped with an $n$-linear map $f: \mathbb V \times \dots \times \mathbb V \to \mathbb V$ and consider  ${\mathcal B}=\{e_i: i\in I\}$ a  fixed   a basis of $\mathbb V$. Further, admit that $g:\mathbb V \to \mathbb V$ is a linear isomorphism satisfying
$$f\left(g\left(\xi_1\right),\dots,g\left(\xi_{n}\right)\right)=g\left(f\left(\xi_1,\dots,\xi_{n}\right)\right)$$ 
for any $\xi_i \in \mathcal B$. Then the decompositions
$$\Gamma:=\mathbb V=\bigoplus\limits_{[V_{[e_i]}] \in {\mathcal{F}} / \approx} \overbrace{V_{[e_i]}} \hbox{ and } \Gamma^{\prime}:=\mathbb V=\bigoplus\limits_{[V_{[g(e_i)]}] \in {\mathcal{F}^{\prime}} / \approx} \overbrace{V_{[g(e_i)]}},$$ corresponding to the choices of ${\mathcal B}$  and ${\mathcal B}^{\prime}:=\{g(e_i): i \in I\}$  respectively  in  Theorem \ref{theo1}, are isomorphic.
\end{proposition}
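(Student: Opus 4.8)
The plan is to take $g$ itself as the required isomorphism and to build the bijection $\sigma$ between the index sets by showing that $g$ carries the entire combinatorial construction of Section~2 performed on $\mathcal B$ onto the one performed on $\mathcal B'=\{g(e_i):i\in I\}$. First I would record that the hypothesis $f(g(\xi_1),\dots,g(\xi_n))=g(f(\xi_1,\dots,\xi_n))$, assumed only on basis vectors $\xi_i\in\mathcal B$, in fact holds for all $v_1,\dots,v_n\in\mathbb V$: the maps $(v_1,\dots,v_n)\mapsto f(g(v_1),\dots,g(v_n))$ and $(v_1,\dots,v_n)\mapsto g(f(v_1,\dots,v_n))$ are both $n$-linear and agree on every tuple of elements of $\mathcal B$, hence they coincide on $\mathbb V^{n}$. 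In particular $g$ is an admissible isomorphism in the sense of the definition of isomorphic decompositions, and passing the same identity through $g^{-1}$ shows $g^{-1}$ enjoys the same compatibility.

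Next I would transport the connection relation. Writing $g(\ov{e}_i):=\ov{g(e_i)}$, Lemma~\ref{genesis} gives $g(F(U,\xi_1,\dots,\xi_{n-1}))=F(g(U),g(\xi_1),\dots,g(\xi_{n-1}))$ together with its barred analogue, where the right-hand $F$ is the one associated with $\mathcal B'$. Applying these identities step by step along a connection $(X_1,\dots,X_m)$ from $e_i$ to $e_j$ (Definition~\ref{connection}), a straightforward induction on $m$ shows that $(g(X_1),\dots,g(X_m))$ is a connection from $g(e_i)$ to $g(e_j)$ relative to $\mathcal B'$; the same argument for $g^{-1}$ gives the converse. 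Hence $e_i\sim e_j$ with respect to $\mathcal B$ if and only if $g(e_i)\sim g(e_j)$ with respect to $\mathcal B'$, so $[e_i]\mapsto[g(e_i)]$ is a well-defined bijection and $g(V_{[e_i]})=V_{[g(e_i)]}$ for every class. Thus $g$ maps the $f$-orthogonal decomposition of Proposition~\ref{meta} for $\mathcal B$ onto the one for $\mathcal B'$, and consequently the projections satisfy $\Pi_{V_{[g(e_i)]}}\circ g=g\circ\Pi_{V_{[e_i]}}$.

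It then remains to transport the finer relation $\approx$ of Definition~\ref{ane3}. Since $g$ is linear it commutes with linear spans and, by the compatibility established above together with $g(\mathbb V)=\mathbb V$ and $g(V_{[\eta]})=V_{[g(\eta)]}$, one has $g(f(\mathbb V,\dots,V_{[\eta]}^{(k_1)},\dots,V_{[\eta]}^{(k_2)},\dots,\mathbb V))=f(\mathbb V,\dots,V_{[g(\eta)]}^{(k_1)},\dots,V_{[g(\eta)]}^{(k_2)},\dots,\mathbb V)$. Combining this with $\Pi_{V_{[g(\xi)]}}\circ g=g\circ\Pi_{V_{[\xi]}}$, each summand occurring in the defining inequalities of $\approx$ for $\mathcal B'$ is precisely the $g$-image of the corresponding summand for $\mathcal B$; as $g$ is injective, such a sum vanishes for $\mathcal B$ if and only if its $g$-image vanishes for $\mathcal B'$. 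Therefore a chain $\{[\xi_1],\dots,[\xi_m]\}$ witnesses $V_{[e_i]}\approx V_{[e_j]}$ exactly when $\{[g(\xi_1)],\dots,[g(\xi_m)]\}$ witnesses $V_{[g(e_i)]}\approx V_{[g(e_j)]}$, so $\approx$ is preserved and $g(\overbrace{V_{[e_i]}})=\overbrace{V_{[g(e_i)]}}$ for every class. Taking $\sigma$ to be the induced bijection $[V_{[e_i]}]\mapsto[V_{[g(e_i)]}]$, the pair $(g,\sigma)$ exhibits $\Gamma$ and $\Gamma^{\prime}$ as isomorphic, as claimed.

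The genuinely new content is concentrated in these two transport steps, and I expect the main obstacle to be the inductive verification that $g$ carries connections to connections: this is where Lemma~\ref{genesis} is indispensable, and one must check carefully that the convention $g(\ov{e}_i)=\ov{g(e_i)}$ makes the barred and unbarred instances of the intertwining identity match up correctly along the list $(X_1,\dots,X_m)$. Once the first decomposition is shown to correspond under $g$, the passage to the relation $\approx$ is essentially formal, resting only on the linearity of $g$ and on the projection identity.
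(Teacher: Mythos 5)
Your proposal is correct and follows essentially the same route as the paper: transport connections via Lemma~\ref{genesis} to get $g(V_{[e_i]})=V_{[g(e_i)]}$, then use the intertwining of $g$ with the projections to carry the relation $\approx$ over, yielding $g(\overbrace{V_{[e_i]}})=\overbrace{V_{[g(e_i)]}}$ and the induced bijection $\sigma$. Your opening observation that the compatibility identity extends from basis tuples to all of $\mathbb V^{n}$ by $n$-linearity is a small point the paper leaves implicit, but it is needed for $g$ to satisfy the definition of an isomorphism of decompositions, so it is a welcome addition rather than a deviation.
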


\begin{proof}
Firstly, let us observe that, according to the previous result, we may state that if $e_i$ is connected to some $e_j$, for some $i,j \in I$, $e_i,e_j \in \mathcal B$ through a connection $(X_1,X_2,\dots,X_m)$, where $X_{i}=\left(a_{i1},\dots,a_{in-1} \right)$ such that $a_{ik} \in  {\mathcal B}\dot\cup \overline{{\mathcal B}} $, $i \in \{1,\dots,m\},\ k\in \{1,\dots,n-1\}$, then $g(e_i)$ is connected to $g(e_j)$ through the connection $(g(X_1),g(X_2),...,g(X_n))$, where
$g(X_{i}):=\left(g(a_{i1}),\dots,g(a_{in-1}) \right)$ and $g(a_{ik}) \in   {\mathcal B^{\prime}} \cup \ov{{\mathcal B}^{\prime}}$, (where $g(\ov{e}_k):=\ov{g(e_k)}$).
\smallskip
Thus, it is possible to conclude that 
$$g(V_{[e_i]})=V_{[g(e_i)]}$$ 
for any $[e_i] \in {\mathcal B} / \sim$. Further, 
it is also clear that the mapping $\mu$ such that 
$$\mu(V_{[e_i]})=V_{[g(e_i)]}$$
defines a bijection between the families ${\mathcal{F}}:=\{V_{[e_i]}:[e_i]\in {\mathcal B}/ \sim  \}$ and
${\mathcal{F}}^{\prime}:=\{V_{[g(e_i)]}:[g(e_i)]\in {\mathcal B}^{\prime}/ \sim  \}$.

\smallskip

Now, from Lemma \ref{genesis}  we have
$$\hspace{-1cm}  g\left(\Pi_{V_{[e_i]}}(f(\mathbb V, \ldots, V_{[e_j]}^{(k_1)}, \ldots, V_{[e_j]}^{(k_2)}, \ldots, \mathbb V)\right)
=\Pi_{V_{[g(e_i)]}} 
\left( f(\mathbb V, \ldots, V_{[g(e_j)]}^{(k_1)}, \ldots, V_{[g(e_j)]}^{(k_2)}, \ldots, \mathbb V) \right)$$
for $i,j \in I$ and $k_1,k_2 \in \{1,\dots , n\}$, with $k_1<k_2$. This allows to deduce that 

\begin{equation}\label{gene1}
g(\overbrace{V_{[e_i]}})=\overbrace{V_{[g(e_i)]}}
\end{equation}
for any $[V_{[e_i]}] \in  {\mathcal{F}} / \approx$,  which induces a second bijection, $\sigma$, now between the families $ {\mathcal{F}} / \approx$ and
${\mathcal{F}^{\prime}} / \approx $ given by

\begin{equation}\label{gene2}
\sigma([V_{[e_i]}])=[V_{[g(e_i)]}].
\end{equation}

From equations (\ref{gene1}) and (\ref{gene2}) we conclude that the decompositions $\Gamma$ and $\Gamma^{\prime}$ are isomorphic.
\end{proof}

Being $f$ an $n$-linear map on $\mathbb V$, the following set 
 $${\rm O}_{f}(\mathbb V)=\{g \in {\rm GL}(\mathbb V): f(g(v_1),\dots,g(v_n))=g(f(v_1,\dots,v_n)) \hbox{ for any } v_1,\dots,v_n \in \mathbb V\},$$ 
(where $\rm{GL}(\mathbb V)$ denotes the group of all linear isomorphisms of $\mathbb V$), is known as the {\it orbit} of $\mathbb V$ (associated to $f$). We have that ${\rm O}_{f}(\mathbb V)$ is a subgroup of $\rm{GL}(\mathbb V)$. If we also denote by ${\mathfrak B}$ the set of all bases of $\mathbb V$ we get the action

\begin{equation}\label{act}
  {\rm O}_{f}(\mathbb V) \times {\mathfrak B} \to {\mathfrak B}
\end{equation}
   given by $(g, \{e_i\}_{i \in I})=\{g(e_i)\}_{i \in I}$. The previous result states that if two bases ${\mathcal B} $ and ${\mathcal B}^{\prime} $ of $\mathbb V$ belong to the same orbit under the action given by equation (\ref{act}), then they induce two isomorphic decompositions of $\mathbb V$. Finally, this can be stated as follows.

   \begin{corollary}
Let $\mathbb V$ be a linear space  equipped with an $n$-linear map \newline $f: \mathbb V \times \dots \times \mathbb V \to \mathbb V$ and fix two  bases  ${\mathcal B} =\{e_i: i \in I\}$ and ${\mathcal B}^{\prime} =\{u_i: i \in I\}$ of $\mathbb V$. Suppose there exists a bijection $\mu:I \to I$  such that  the linear isomorphism $g:\mathbb V \to \mathbb V$ determined by $g(e_i):=u_{\mu(i)}$ for any $i\in I$, satisfies 

$$f\left(g(v_1),\dots,u_{\mu(i)}^{(k_1)},\dots, u_{\mu(j)}^{(k_2)},\dots,g(v_{n-2})\right)=g(f(v_1,\dots,e_i^{(k_1)},\dots, e_j^{(k_2)},\dots,v_{n-2}))$$ 
for any $i,j \in I$, $k_1, k_2 \in \{1,\dots,n\}$, with $k_1<k_2$. Then the decompositions
$$\Gamma:=\mathbb V=\bigoplus\limits_{[V_{[e_i]}] \in {\mathcal{F}} / \approx} \overbrace{V_{[e_i]}} \hbox{ and } \Gamma^{\prime}:=\mathbb V=\bigoplus\limits_{[V_{[u_i]}] \in {\mathcal{F}^{\prime}} / \approx} \overbrace{V_{[u_i]}},$$ corresponding to the choices of ${\mathcal B}$  and ${\mathcal B}^{\prime}$,  respectively, in  Theorem \ref{theo1}, are isomorphic.
\end{corollary}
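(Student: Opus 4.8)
The plan is to obtain this Corollary as a direct consequence of Proposition~\ref{188}. Recall that the hypothesis of that proposition asks only for the intertwining identity $f(g(\xi_1),\dots,g(\xi_n))=g(f(\xi_1,\dots,\xi_n))$ to hold for all $\xi_i\in\mathcal B$. Thus the entire task reduces to checking that the (apparently more intricate) hypothesis of the Corollary delivers precisely this identity on every $n$-tuple of basis vectors; once this is done, Proposition~\ref{188} applies without change.

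First I would reconcile the two bases. Since $\mu\colon I\to I$ is a bijection and $g(e_i)=u_{\mu(i)}$, we have $\{g(e_i):i\in I\}=\{u_{\mu(i)}:i\in I\}=\{u_i:i\in I\}=\mathcal B'$ as sets. Hence the basis $\{g(e_i):i\in I\}$ occurring in Proposition~\ref{188} coincides with the basis $\mathcal B'=\{u_i\}$ of the present statement, so the decomposition $\Gamma'$ indexed over $\mathcal F'/\approx$ is literally the same object in both statements.

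The heart of the argument is then a single specialization of the arguments $v_1,\dots,v_{n-2}$. Fix an arbitrary $n$-tuple $(\eta_1,\dots,\eta_n)\in\mathcal B^n$ and take the distinguished positions to be $k_1=1$ and $k_2=2$. Choosing $e_i:=\eta_1$, $e_j:=\eta_2$ in the two special slots and the free vectors as the remaining basis elements, $v_1:=\eta_3,\dots,v_{n-2}:=\eta_n$, and using $u_{\mu(i)}=g(e_i)$, the hypothesis of the Corollary becomes
$$f\bigl(g(\eta_1),g(\eta_2),g(\eta_3),\dots,g(\eta_n)\bigr)=g\bigl(f(\eta_1,\eta_2,\eta_3,\dots,\eta_n)\bigr),$$
which is exactly the intertwining relation of Proposition~\ref{188} on the tuple $(\eta_1,\dots,\eta_n)$. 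As this tuple was arbitrary in $\mathcal B^n$, the hypothesis of Proposition~\ref{188} is verified in full.

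Finally I would invoke Proposition~\ref{188} for this $g$: it furnishes the bijection $\sigma([V_{[e_i]}])=[V_{[g(e_i)]}]$ between $\mathcal F/\approx$ and $\mathcal F'/\approx$ together with the relations $g(\overbrace{V_{[e_i]}})=\overbrace{V_{[g(e_i)]}}$, which is precisely the assertion that $\Gamma$ and $\Gamma'$ are isomorphic. I do not anticipate a genuine obstacle; the only point requiring attention is the bookkeeping of entries, namely confirming that every $n$-tuple of basis vectors is attained by placing two of its entries in the distinguished slots and the other $n-2$ as the free vectors, so that no tuple in $\mathcal B^n$ is overlooked.
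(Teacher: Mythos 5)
Your proposal is correct and matches the paper's intent exactly: the paper states this corollary as an immediate restatement of Proposition~\ref{188} (via the orbit action), and your reduction --- specializing $v_1,\dots,v_{n-2}$ to basis vectors and the two distinguished slots to the remaining two entries so that the corollary's hypothesis yields the intertwining identity $f(g(\xi_1),\dots,g(\xi_n))=g(f(\xi_1,\dots,\xi_n))$ on all of $\mathcal B^n$ --- is precisely the bookkeeping the paper leaves implicit. Your observation that $\{g(e_i):i\in I\}=\mathcal B'$ as sets, so the two versions of $\Gamma'$ coincide, is also the right (and only) point needing verification.
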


\section{A  characterization of the $f$-simplicity of the components }

Our aim in this section is to establish  a characterization theorem  on the $f$-simplicity of the linear subspaces $\overbrace{V_{[e_i]}}$,  which appear in the decomposition of $\mathbb V$ given in Theorem \ref{theo1}.

\medskip

Let us begin by recalling  several  concepts from the theory of algebras.

\medskip

Let $\mathbb{A}$ be an algebra equipped with an $n$-ary multiplication 
$[ .,\dots ,.]\ :\mathbb{A}\times \dots \times \mathbb{A} \to \mathbb{A}$ 
and  ${\mathcal{B}}$  a basis of $\mathbb{A}$. The basis ${\mathcal{B}}$  is said to be an \textit{$i$-division basis} if for any $e_i \in {\mathcal{B}}$ and $b_1,\dots,b_{n-1} \in \mathbb{A}$ such that 
$$[ b_1,\dots ,e_i^{(k)},\dots,b_{n-1} ] =w\neq 0$$ 
for some $k\in \{1,\dots,n\}$ we have that $e_i,b_1,\dots,b_{n-1}  \in {\mathcal{I}}(w)$, where ${\mathcal{I}}(w)$ denotes
the {\it ideal of $\mathbb A$ generated by $w$}.

\medskip
The above notion can be generalized to the case of a linear space  $\mathbb V$  equipped with an $n$-linear map $f: \mathbb V \times \dots \times \mathbb V \to \mathbb V$. We refer to the minimal strongly $f$-invariant subspace of $\mathbb V$ that contains $v$ as the  {\it strongly $f$-invariant subspace  of $\mathbb V$ generated by $v$}, and will be  denoted by ${\mathcal{I}}(v)$. Observe that the sum of two strongly $f$-invariant subspaces  of $\mathbb V$ is also a strongly $f$-invariant subspace, and that the whole  $\mathbb V$ is a trivial strongly $f$-invariant subspace.

\begin{definition}\rm
Let $\mathbb V$ be a linear space, ${\mathcal B}=\{e_i:i \in I\}$ a fixed basis of $\mathbb V$  and  $f: \mathbb V \times \dots \times \mathbb V \to \mathbb V$ an $n$-linear map. It is said that ${\mathcal B}$ is an {\it $i$-division basis} of $\mathbb V$ respect  to $f$,  if for any $e_i \in {\mathcal{B}%
}$ and $b_1,\dots,b_{n-1} \in \mathbb{V}$ such that 
$$f\left( b_1,\dots ,e_i^{(k)},\dots,b_{n-1} \right) =w\neq 0$$ 
for some $k\in \{1,\dots,n\}$ we have that $e_i,b_1,\dots,b_{n-1}  \in {\mathcal{I}}(w)$, where ${\mathcal{I}}(w)$ denotes
the strongly $f$-invariant subspace  of $\mathbb V$ generated by $w$.
\end{definition}

Let us return to the decomposition of the linear space $\mathbb V$, given an $n$-linear map $f: \mathbb V \times \dots \times \mathbb V \to \mathbb V$  and fixed a basis ${\mathcal B}$, $$\mathbb V=\bigoplus\limits_{[V_{[e_i]}] \in {\mathcal{F}} / \approx} \overbrace{V_{[e_i]}}$$
as deduced by Theorem \ref{theo1}.  
For any 
$\overbrace{V_{[e_i]}}$ we can restrict $f$ to the $n$-linear map 
$$f^{\prime}: \overbrace{V_{[e_i]}} \times \dots \times \overbrace{V_{[e_i]}} \to \overbrace{V_{[e_i]}}$$ and consider on $\overbrace{V_{[e_i]}}$ the basis ${\mathcal B}^{\prime}:= {\mathcal B} \cap \overbrace{V_{[e_i]}}$. Then we can assert:

\begin{theorem}\label{ane100}
  The linear space $\overbrace{V_{[e_i]}}$ is $f^{\prime}$-simple  if and only if
${\rm Ann}(f^{\prime})=0$ and  ${\mathcal B}^{\prime}$ is an $i$-division basis of $\overbrace{V_{[e_i]}}$ with respect to $f^{\prime}$.
\end{theorem}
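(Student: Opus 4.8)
The plan is to prove the two implications separately, with the forward one being essentially immediate and all the substance sitting in the converse. Throughout I write $W:=\overbrace{V_{[e_i]}}$, and I note that $W\neq 0$. Two elementary remarks drive the easy direction. First, ${\rm Ann}(f^{\prime})$ is itself strongly $f^{\prime}$-invariant, since $f^{\prime}(W,\dots,{\rm Ann}(f^{\prime})^{(j)},\dots,W)=0\subset {\rm Ann}(f^{\prime})$; second, for any $0\neq w\in W$ the subspace $\mathcal{I}(w)$ is by definition a nonzero strongly $f^{\prime}$-invariant subspace. So if $W$ is $f^{\prime}$-simple, then ${\rm Ann}(f^{\prime})\in\{0,W\}$, and ${\rm Ann}(f^{\prime})=W$ would give $f^{\prime}(W,\dots,W)=0$, contradicting simplicity; hence ${\rm Ann}(f^{\prime})=0$. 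Likewise, for $e\in\mathcal{B}^{\prime}$ and $b_1,\dots,b_{n-1}\in W$ with $f^{\prime}(b_1,\dots,e^{(k)},\dots,b_{n-1})=w\neq 0$, simplicity forces $\mathcal{I}(w)=W$, so trivially $e,b_1,\dots,b_{n-1}\in\mathcal{I}(w)$; thus $\mathcal{B}^{\prime}$ is an $i$-division basis.

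For the converse I assume ${\rm Ann}(f^{\prime})=0$ and that $\mathcal{B}^{\prime}$ is an $i$-division basis; that $f^{\prime}(W,\dots,W)\neq 0$ is clear, else $W\subset{\rm Ann}(f^{\prime})$. The structural input I would exploit is that, by Lemma \ref{elp1}, the restriction $f^{\prime}$ couples the summands of $W=\bigoplus V_{[e_j]}$ \emph{diagonally}: $f^{\prime}(V_{[\gamma_1]},\dots,V_{[\gamma_n]})=0$ unless the classes $\gamma_1,\dots,\gamma_n$ all coincide, since two distinct classes in distinct slots annihilate each other. Using this I would first record two propagation facts. \emph{(Grading-respect.)} Any strongly $f^{\prime}$-invariant $U\subset W$ splits as $U=\bigoplus(U\cap V_{[e_j]})$: given $u=\sum u_\gamma\in U$ with $u_{\gamma_0}\neq 0$, the hypothesis ${\rm Ann}(f^{\prime})=0$ and the diagonal coupling produce basis vectors $b_1,\dots,b_{n-1}\in V_{[\gamma_0]}\cap\mathcal{B}^{\prime}$ with $p:=f^{\prime}(b_1,\dots,u_{\gamma_0}^{(k)},\dots,b_{n-1})=f^{\prime}(b_1,\dots,u^{(k)},\dots,b_{n-1})\neq 0$, whence $p\in U$, and the $i$-division property gives $u_{\gamma_0}\in\mathcal{I}(p)\subset U$. \emph{(Connectedness.)} If $U$ meets some $V_{[e_j]}$ nontrivially, then $V_{[e_j]}\subset U$: a nonzero element first yields a basis vector of $V_{[e_j]}$ in $U$, and one then walks along a connection in the sense of Definition \ref{connection}, each forward $F$-step preserving membership in $U$ by strong invariance and each backward $F$-step by applying the $i$-division property to the relevant nonzero product. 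Together these show that every strongly $f^{\prime}$-invariant $U$ has the form $\bigoplus_{\gamma\in S}V_{[\gamma]}$ for some set $S$ of classes.

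It remains to force $S$ to be the full index set, which is the crux. Suppose $0\neq U\subsetneq W$, so $S$ omits some class $\eta$; fix $\xi\in S$ and a basis vector $e\in V_{[\xi]}\cap\mathcal{B}^{\prime}$. If every $f^{\prime}(c_1,\dots,e^{(k)},\dots,c_{n-1})$ with $c_j\in V_{[\xi]}$ vanishes, then by the diagonal coupling $f^{\prime}(W,\dots,e^{(k)},\dots,W)=0$ for all $k$, i.e. $e\in{\rm Ann}(f^{\prime})=0$, impossible. So some $w:=f^{\prime}(c_1,\dots,e^{(k)},\dots,c_{n-1})\neq 0$ with all $c_j\in V_{[\xi]}\subset U$, giving $w\in U$ and $\mathcal{I}(w)\subset U$. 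Now choose $0\neq s\in V_{[\eta]}$ and replace $c_1$ by $c_1+s$: as $s$ lies in a class different from $\xi$, orthogonality annihilates the extra term and the product is still $w\neq 0$. The $i$-division property, with distinguished basis vector $e$, then forces $c_1+s\in\mathcal{I}(w)$, and grading-respect applied to $\mathcal{I}(w)$ extracts its $V_{[\eta]}$-component, so $s\in\mathcal{I}(w)\subset U$. This contradicts $V_{[\eta]}\cap U=0$. Hence $S$ is complete, $U=W$, and $W$ is $f^{\prime}$-simple.

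The step I expect to be the main obstacle is precisely this final perturbation argument. The decisive feature of the $i$-division hypothesis is that it constrains \emph{arbitrary} arguments $b_1,\dots,b_{n-1}\in W$, not merely basis vectors, and it is the interplay of this with the orthogonality of Lemma \ref{elp1} that converts ``a nonzero product survives'' into ``an omitted class is dragged into $\mathcal{I}(w)$''. Making the grading-respect lemma and the connectedness propagation mesh cleanly with the $i$-division property — in particular checking that backward $F$-steps preserve membership in a strongly $f^{\prime}$-invariant subspace — is the technical heart; everything else is bookkeeping with the decomposition $W=\bigoplus V_{[e_j]}$.
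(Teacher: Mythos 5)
Your proof is correct. The forward implication and the first half of your converse — capturing basis vectors via ${\rm Ann}(f^{\prime})=0$ together with the $i$-division property, then sweeping out each class $V_{[\gamma]}$ by walking a connection, with forward $F$-steps handled by strong invariance and backward ones by $i$-division — coincide with the paper's argument. Where you genuinely diverge is the final, hardest step: passing from one class $V_{[\xi]}$ to the whole block $\overbrace{V_{[e_i]}}$. The paper unwinds Definition \ref{ane3}: it follows the chain $[\xi_k],[\nu_2],\dots,[\nu_j]$ of classes with nonzero cross-projections, at each link producing an element $x_2+c$ with nonzero component in the next class, multiplying back into that class using ${\rm Ann}(f^{\prime})=0$, and applying $i$-division twice. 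You instead prove that every strongly $f^{\prime}$-invariant subspace is graded by the $V_{[\gamma]}$'s, and then kill any omitted class $\eta$ by perturbing one argument of a nonzero product by $s\in V_{[\eta]}$: Lemma \ref{elp1} leaves the product unchanged, $i$-division (which, as you note, applies to the non-homogeneous argument $c_1+s$, not only to basis vectors) forces $c_1+s\in\mathcal{I}(w)$, and grading extracts $s$. This never invokes the $\approx$-chains, is shorter, and isolates a reusable structural lemma; the paper's route makes explicit how the nonzero projections of Definition \ref{ane3} glue the classes. The two delicate points — applicability of $i$-division to arbitrary arguments, and strong $f^{\prime}$-invariance of $\mathcal{I}(w)$ — both hold by the paper's definitions, so your perturbation step is sound.
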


\begin{proof}

Suppose that $\overbrace{V_{[e_i]}}$ is $f^{\prime}$-simple. 
Observe firstly that ${\rm Ann}(f^{\prime})$ is a strongly $f^{\prime}$-invariant subspace of $\overbrace{V_{[e_i]}}$, and thus ${\rm Ann}(f^{\prime})=0.$ Additionally, if we consider
some  $e_j \in {\mathcal{B}}^{\prime}$ and $b_1,\dots,b_{n-1} \in \overbrace{V_{[e_i]}}$ such that
$$f^{\prime}\left( b_1,\dots ,e_j^{(k)},\dots,b_{n-1} \right) =w\neq 0$$ 
for some $k\in \{1,\dots,n\}$, since $\overbrace{V_{[e_i]}}$ is  $f^{\prime}$-simple, we have
$${\mathcal{I}}(w)=\overbrace{V_{[e_i]}}$$ 
and so $e_j,b_1,\dots,b_{n-1} \in {\mathcal{I}}(w)$. Thus, the basis ${\mathcal B}^{\prime}$ is an $i$-division basis of $\overbrace{V_{[e_i]}}$ with respect to $f^{\prime}$.

\medskip

Conversely, let us suppose that ${\rm Ann}(f^{\prime})=0$ and that the set ${\mathcal B}^{\prime}$ is an $i$-division basis of $\overbrace{V_{[e_i]}}$ with respect to $f^{\prime}$. Consider  any nonzero strongly $f^{\prime}$-invariant linear subspace $W$   of $\overbrace{V_{[e_i]}}$ and take some nonzero $ w \in W$. Since ${\rm Ann}(f^{\prime})=0$, there are nonzero elements 
$$ \xi_1,\dots,\xi_{n-1} \in {\mathcal B}^{\prime}$$ 
such that
$$0 \neq f\left( \xi_1,\dots ,w^{(j)},\dots,\xi_{n-1} \right) \in W$$ 
for some $j\in \{1,\dots,n\}.$ 
Since ${\mathcal B}^{\prime}$ is an $i$-division basis, we get
\begin{equation}\label{ane1}
 \xi_k \in W,
 \end{equation}
 for all $k\in \{1,\dots,n-1\}$.

Let us now prove that $ V_{[\xi_k]} \subset W$ for each $k\in \{1,\dots,n-1\}$. To do so, we have to show  that for any
 $\nu_j \in [\xi_k]$ such that  $\nu_j \neq \xi_k$,
 we must conclude that $\nu_j \in W$. It is clear that  $\xi_k$ is connected to any $\nu_j\in [\xi_k]$, and thus there is a connection
$ (X_1,X_2,...,X_m)$,  where $X_{i}=\left(a_{i1},\dots,a_{in-1} \right)$ such that $a_{il} \in  {\mathcal B}\dot\cup \overline{{\mathcal B}} $, $i \in \{1,\dots,m\},\ l\in \{1,\dots,n-1\}$, from $\xi_k$ to $\nu_j$.
 
 Recall that we are dealing with an $f$-orthogonal and strongly $f$-invariant  decomposition of $\mathbb{V}$ (by  Theorem \ref{theo1}). Thus, we may claim that the elements $a_{il}$ satisfy
 \begin{equation}\label{ane2}
 a_{il} \in {\mathcal B}^{\prime} \cup \overline{ {\mathcal B}^{\prime} },
 \end{equation}
 and that the whole connection process from $\xi_k$ to $\nu_j$ can be  deduced in $\overbrace{V_{[e_i]}}$.


 \medskip
 We have that $$F(\{\xi_k\}, X_1)=F(\{\xi_k\}, a_{11},\dots,a_{1 n-1}) \neq \emptyset.$$ There are two cases to discuss. \\
 First case: $a_{1l} \in {\mathcal B}^{\prime}$, $l=1,\dots,n-1$ and so there exists 
 $$0\neq x=f\left( a_{11},\dots ,\xi_k^{(r)},\dots,a_{1 n-1} \right),$$
 for some $r\in \{1,\dots,n\}$. \\
 Second case: 
 $a_{1l} \in \overline{{\mathcal B}^{\prime}}$, $l=1,\dots,n-1$ and so there exists 
 $0\neq x \in \overbrace{V_{[e_i]}}$ such that 
 $$f\left( \overline{a}_{11},\dots ,x^{(r)},\dots,\overline{a}_{1 n-1} \right)= \xi_k,$$
 for some $r\in \{1,\dots,n\}$.
 
 Consider the first case. As a consequence of the inclusion (\ref{ane1}), we obtain  $  x \in W.$
 
 Consider now the second case. By the $i$-division property of the basis ${\mathcal B}^{\prime}$ and due to inclusion (\ref{ane1})  we conclude that  $x \in {\mathcal I}(\xi_k) \subset W$.  
   
So, in both cases we have shown that
\begin{equation}\label{fel1}
F(\{\xi_k\}, X_1) \subset  W.
\end{equation}

\medskip

By the connection definition, we have 
$$F(F(\{\xi_k\}, X_1), X_2)\neq \emptyset,$$ 
where  $F(\{\xi_k\}, a_1) \subset W$ as seen in (\ref{fel1}).

  Given an arbitrary $t \in F(F(\{\xi_k\}, X_1), X_2)$, as before, we have two cases to distinguish. In the first one
 $a_{2l} \in {\mathcal B}^{\prime}$, $l=1,\dots,n-1$ and so there exists $z \in F(\{\xi_k\}, X_1)$ such that
 $$0\neq z=f\left( a_{21},\dots ,\xi_k^{(r^{\prime})},\dots,a_{2 n-1} \right),$$
 for some $r^{\prime}\in \{1,\dots,n\}$.\\
 In the second one $a_{2l} \in \overline{{\mathcal B}^{\prime}}$,  and then  there exists $z \in F(\{\xi_k\}, X_1)$  such that $0\neq f(\overline{a}_{21},\dots ,t^{(r^{\prime})},\dots,\overline{a}_{2 n-1}) =z$ .

\medskip

  In the first case the inclusion (\ref{fel1}) shows that $t \in W.$
  In the second case  the $i$-division property of ${\mathcal B}^{\prime}$ gives us that
  $t \in {\mathcal I}(z) \subset W$. 
  
  In both cases, we have
  $$F(F(\{\xi_k\}, X_1), X_2)\subset W.$$

Iterating this argument on the connection (\ref{ane2}), we obtain that 
$$\nu_j \in  F( F (\dots (F(F(\{\xi_k\},X_1),X_{2}),\dots,X_{m-1}), X_{m}) \subset W$$ 
and so we can assert that
\begin{equation}\label{ane4}
V_{[\xi_k]} \subset W.
\end{equation}

\medskip

To finish the proof, we must prove that  all $V_{[\nu_j]}$ such that $V_{[\nu_j]}\approx   V_{[\xi_k]}$ verifies $V_{[\nu_j]} \subset W.$

Under the above assumption, there exists  a subset
\begin{equation}\label{metalli}
\{[\xi_k],[\nu_2],...,[\nu_j]\} \subset {\mathcal B}/ \sim
\end{equation}
 satisfying the conditions in Definition \ref{ane3}. From here, 
 
 $$\sum\limits_{1 \leq i<i^{\prime} \leq n}  \left[\Pi_{V_{[\xi_k]}}(f(\mathbb V, \scriptsize{\ldots}, V_{[\nu_2]}^{(i)}, \scriptsize{\ldots}, V_{[\nu_2]}^{(i^{\prime})}, \scriptsize{\ldots}, \mathbb V)) + \Pi_{V_{[\nu_2]}}(f(\mathbb V, \scriptsize{\ldots}, V_{[\xi_k]}^{(i)}, \scriptsize{\ldots},  V_{[\xi_k]}^{(i^{\prime})},  \scriptsize{\ldots}, \mathbb V)) \right]\neq 0.$$
 Therefore, there are $i,i^{\prime}\in \{1,\dots,n\}$ with $i<i^{\prime}$, such that
$$\Pi_{V_{[\nu_2]}}(f(\mathbb V, \ldots, V_{[\xi_k]}^{(i)}, \ldots,  V_{[\xi_k]}^{(i^{\prime})},  \ldots, \mathbb V))\neq 0$$
or 
 $$\Pi_{V_{[\xi_k]}}(f(\mathbb V, \ldots, V_{[\nu_2]}^{(i)}, \ldots, V_{[\nu_2]}^{(i^{\prime})}, \ldots, \mathbb V))\neq 0.$$
 \medskip

Consider the first case, in which $$\Pi_{V_{[\nu_2]}}(f(\mathbb V, \ldots, V_{[\xi_k]}^{(i)}, \ldots,  V_{[\xi_k]}^{(i^{\prime})},  \ldots, \mathbb V))\neq 0.$$ 
Then there exist $e_k^{\prime},  e_k^{\prime \prime } \in [\xi_k]$ and $b_1,\dots,b_{n-2}\in \mathbb V$ such that

$$0 \neq f(b_1,\dots,{e_k^{\prime}} ^{(i)}, \dots,  {e_k^{\prime \prime}} ^{(i^{\prime})},  \dots, b_{n-2})=x_2+c$$
where $0 \neq x_2 \in V_{[\nu_2]}$ and  $c \in \bigoplus\limits_{[\nu_j] \neq [\nu_2]} V_{[\nu_j]}$.

Since ${\rm Ann}(f^{\prime})=0$, and taking into account Lemma \ref{elp1}, there exist
$e_{21}^{\prime},\dots,e_{2 n-1}^{\prime} \in [\nu_2]$ such that

$$0\neq f(e_{21}^{\prime},\dots,{x_2} ^{(r)}, \dots,    e_{2 n-1}^{\prime}) =q$$
for some $r\in \{1,\dots,n\}$.  By Lemma \ref{elp1}  and (\ref{ane4}) we have that
$$ 0\neq  f(e_{21}^{\prime},\dots,{f(b_1,\dots,{e_k^{\prime}} ^{(i)}, \dots,  {e_k^{\prime \prime}} ^{(i^{\prime})},  \dots, b_{n-2})} ^{(r)}, \dots,    e_{2 n-1}^{\prime}) =$$

$$ f(e_{21}^{\prime},\dots,{(x_2+c)} ^{(r)}, \dots, e_{2 n-1}^{\prime}) =f(e_{21}^{\prime},\dots,{x_2} ^{(r)}, \dots, e_{2 n-1}^{\prime}) =q\in  W.$$ 
From here, by  the $i$-division property of ${\mathcal B}^{\prime}$  we conclude that 
$$e_{21}^{\prime},\dots,e_{2 n-1}^{\prime} \in {\mathcal I}(q) \subset W.$$

Concerning the second case, recall that we have 
$$\Pi_{V_{[\xi_k]}}(f(\mathbb V, \ldots, V_{[\nu_2]}^{(i)}, \ldots, V_{[\nu_2]}^{(i^{\prime})}, \ldots, \mathbb V))\neq 0.$$
Similarly to the first case, there exist
$e_2^{\prime},  e_2^{\prime \prime } \in [\nu_2]$ and $b_1,\dots,b_{n-2}\in \mathbb V$ such that
$$0 \neq f(b_1,\dots,{e_2^{\prime}} ^{(i)}, \dots,  {e_2^{\prime \prime}} ^{(i^{\prime})},  \dots, b_{n-2})=x_k+d$$
where $0 \neq x_k \in V_{[\xi_k]}$ and  $d \in \bigoplus\limits_{[\nu_j] \neq [\xi_k]} V_{[\nu_j]}$.
 Again, since ${\rm Ann}(f^{\prime})=0$, there exist
$e_{k1}^{\prime},\dots,e_{k n-1}^{\prime} \in [\xi_k]$ such that 
 
$$0\neq f(e_{k1}^{\prime},\dots,{x_k} ^{(r)}, \dots,    e_{k n-1}^{\prime}) =s$$
for some $r\in \{1,\dots,n\}$.

By Lemma \ref{elp1}  and inclusion (\ref{ane4}) we have that
$$ 0\neq f(e_{k1}^{\prime},\dots,{f(b_1,\dots,{e_2^{\prime}} ^{(i)}, \dots,  {e_2^{\prime \prime}} ^{(i^{\prime})},  \dots, b_{n-2})} ^{(r)}, \dots,    e_{k n-1}^{\prime}) =$$

$$ f(e_{k1}^{\prime},\dots,{(x_k+d)} ^{(r)}, \dots, e_{k n-1}^{\prime}) =f(e_{k1}^{\prime},\dots,{x_k} ^{(r)}, \dots, e_{k n-1}^{\prime}) =s\in  W.$$ 
From here, by  the $i$-division property of ${\mathcal B}^{\prime}$  we conclude that 
$$e_{21}^{\prime},\dots,e_{2 n-1}^{\prime} \in {\mathcal I}(q) \subset W.$$
Applying the $i$-division property of ${\mathcal B}^{\prime}$ this leads to 

$$f(b_1,\dots,{e_2^{\prime}} ^{(i)}, \dots,  {e_2^{\prime \prime}} ^{(i^{\prime})},  \dots, b_{n-2})\in  {\mathcal I}(s) \subset W.$$
A second application of the $i$-division property of ${\mathcal B}^{\prime}$ allows us to write $e_2^{\prime} \in W$.

\medskip

At this point, we have shown in both cases that  there are elements in $[\nu_2]$ belonging to  $W$. Hence by using the same previous argument  as  done with $\xi_k$, (see inclusions (\ref{ane1}) and (\ref{ane4})), we get that $$V_{[\nu_2]} \subset W.$$

\medskip

It is clear that this reasoning can be repeated for all other elements of the set (\ref{metalli}). Henceforth
$$V_{[\nu_j]} \subset W$$
and consequently, since $$\overbrace{V_{[e_i]}}=\overbrace{V_{[\xi_k]}}:= \bigoplus\limits_{V_{[e_j]} \in [V_{[\xi_k]}]}   V_{[e_j]}$$ 
we proved that
$$\overbrace{V_{[e_i]}}=W,$$
that is  $\overbrace{V_{[e_i]}}$ is $f$-simple.
\end{proof}

\begin{remark}\rm
The above result can be restated as follows. 

{\it  The linear space $\overbrace{V_{[e_i]}}$ is $f^{\prime}$-simple  if and only if
${\rm Ann}(f^{\prime})=0$ and  every non-zero element in $\overbrace{V_{[e_i]}}$  is an $i$-division element with respect to $f^{\prime}$.}

\end{remark}

\section{Application to the structure theory of arbitrary $n$-ary algebras}

In this section we will apply the results obtained in the previous sections   to the  structure theory of arbitrary $n$-ary algebras.

\medskip

We  will denote by ${\mathfrak A}$   an arbitrary
$n$-ary algebra in the sense that there are no restrictions on the
 dimension of the algebra nor on the base field ${\mathbb F}$, and that
 no specific identity on the product ($n$-Lie (Filippov) \cite{Fil}, $n$-ary Jordan \cite{kps},  $n$-ary Malcev \cite{Pojidaev}, etc.) is supposed. 
 That is, ${\mathfrak A}$
 is just a linear space over ${\mathbb F}$ endowed with a $n$-linear
 map $$[ \cdot, \ldots, \cdot ]  :{\mathfrak A} \times \ldots \times  {\mathfrak A} \to {\mathfrak A}$$
 $$\hspace{1cm}(x_1, \ldots, x_n) \mapsto [x_1, \ldots, x_n]$$
 called {\it the product} of ${\mathfrak A}$.

 \medskip

 We recall that given an  $n$-ary algebra $({\mathfrak A}, [\cdot, \ldots, \cdot ])$, a {\it subalgebra} of ${\mathfrak A}$ is a linear subspace ${\mathfrak B}$ closed for the product. 
 That is, such that  $[{\mathfrak B}, \ldots, {\mathfrak B}] \subset {\mathfrak B}$. A linear subspace ${\mathfrak I}$ of ${\mathfrak A}$ is called an {\it ideal} of ${\mathfrak A}$  
 if $[{\mathfrak A}, \ldots,  {\mathfrak I}^{(r)}, \ldots, {\mathfrak A}]  \subset {\mathfrak I},$ for all $ r\in\{1,\dots,n\}$. An $n$-ary algebra ${\mathfrak A}$ is said to be {\it simple} if its product is nonzero and its only ideals are $\{0\}$ and ${\mathfrak A}$. 
 We finally recall that the {\it annihilator} of the algebra $({\mathfrak A}, [.,\dots,.])$ is defined as the linear subspace
 $${\rm Ann}({\mathfrak A})=\{x \in {\mathfrak A}:  [{\mathfrak A}, \ldots, x^{(k)}, \ldots,  {\mathfrak A}] =0, \mbox{ for all $k\in\{1,\dots,n\}$ }\}.$$

 \medskip

 If we  fix any basis ${\mathcal B}=\{e_i\}_{i \in I}$ of ${\mathfrak A}$, and denote the product $[.,\dots,.]$ of ${\mathfrak A}$ as $f$,  Theorem \ref{theo1} applies to get that
 ${\mathfrak A}$ decomposes as the $f$-orthogonal direct sum of strongly $f$-invariant linear subspaces
$${\mathfrak A}=\bigoplus\limits_{[{\mathfrak A}_{[e_i]}] \in {\mathcal{F}} / \approx} \overbrace{{\mathfrak A}_{[e_i]}}.$$

\medskip

Now observe that the $f$-orthogonality of the linear subspaces means that, when $[{\mathfrak A}_{[e_i]}] \neq [{\mathfrak A}_{[e_j]}]$, we have 
$$[ \mathfrak A, \ldots, \overbrace{{\mathfrak A}_{[e_i]}}^{(k_1)}, \ldots, \overbrace{{\mathfrak A}_{[e_j]}}^{(k_2)}, \ldots, \mathfrak A]=0,$$ 
for all $k_1,k_2\in \left\{1,\dots,n \right\}$, $k_1 \neq k_2$, and that the strongly $f$-invariance of a linear subspace $\overbrace{{\mathfrak A}_{[e_i]}}$ means that $\overbrace{{\mathfrak A}_{[e_i]}}$ is actually an ideal of ${\mathfrak A}$. From here, we can state:

\begin{theorem}
Let $({\mathfrak A}, [\cdot, \ldots, \cdot ])$ be an arbitrary  algebra. Then for any basis  ${\mathcal B} =\{e_i: i \in I\}$ of ${\mathfrak A}$
one has the decomposition
$${\mathfrak A}=\bigoplus\limits_{[{\mathfrak A}_{[e_i]}] \in {\mathcal{F}} / \approx} \overbrace{{\mathfrak A}_{[e_i]}},$$
being any
$\overbrace{{\mathfrak A}_{[e_i]}}$ an  ideal of $ {\mathfrak A}$. Furthermore, any pair of different ideals in this decomposition is $f$-orthogonal.
\end{theorem}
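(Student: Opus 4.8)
The plan is to obtain this theorem as a direct translation of Theorem \ref{theo1} into the language of $n$-ary algebras. The key observation is that when we set $f := [\cdot,\ldots,\cdot]$, the abstract hypotheses of Theorem \ref{theo1} are exactly satisfied, since $\mathfrak A$ is by definition a linear space equipped with the $n$-linear map $f$. Thus the existence of the decomposition $\mathfrak A = \bigoplus_{[\mathfrak A_{[e_i]}]\in \mathcal F/\approx} \overbrace{\mathfrak A_{[e_i]}}$ as an $f$-orthogonal direct sum of strongly $f$-invariant linear subspaces follows immediately by applying Theorem \ref{theo1} with $\mathbb V = \mathfrak A$. No new construction is needed; everything rests on the earlier machinery.

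The only substantive work is the dictionary between the $f$-theoretic notions and the algebraic ones. First I would argue that each $\overbrace{\mathfrak A_{[e_i]}}$, being strongly $f$-invariant, is an ideal of $\mathfrak A$. This is essentially a definitional matching: strongly $f$-invariance of a subspace $W$ means $f(\mathbb V,\ldots,W^{(r)},\ldots,\mathbb V)\subset W$ for every slot $r\in\{1,\ldots,n\}$, which upon substituting $f=[\cdot,\ldots,\cdot]$ reads $[\mathfrak A,\ldots,\overbrace{\mathfrak A_{[e_i]}}^{(r)},\ldots,\mathfrak A]\subset \overbrace{\mathfrak A_{[e_i]}}$ for all $r$, and this is precisely the definition of an ideal recalled at the start of the section. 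By Proposition \ref{lema_submodulo} each summand is indeed strongly $f$-invariant, so each is an ideal.

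Second, I would spell out the $f$-orthogonality in algebraic terms. The $f$-orthogonality of the decomposition, guaranteed by Lemma \ref{elp1} (and reproduced in Theorem \ref{theo1}), says that for $[\mathfrak A_{[e_i]}]\neq[\mathfrak A_{[e_j]}]$ we have $f(\mathbb V,\ldots,\overbrace{\mathfrak A_{[e_i]}}^{(k_1)},\ldots,\overbrace{\mathfrak A_{[e_j]}}^{(k_2)},\ldots,\mathbb V)=0$ for all $k_1\neq k_2$. Rewriting $f$ as the product yields exactly the displayed vanishing condition $[\mathfrak A,\ldots,\overbrace{\mathfrak A_{[e_i]}}^{(k_1)},\ldots,\overbrace{\mathfrak A_{[e_j]}}^{(k_2)},\ldots,\mathfrak A]=0$, which is the assertion that any two distinct ideals in the decomposition are $f$-orthogonal.

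Since there is no genuine obstacle here, the main point to be careful about is purely notational: ensuring that the symbols $\mathfrak A_{[e_i]}$, $\mathcal F/\approx$, and $\overbrace{\cdot}$ are understood to be the objects produced by the earlier construction applied verbatim to $(\mathfrak A, f)$, rather than reintroduced. In short, the proof is the single sentence ``apply Theorem \ref{theo1} to $(\mathfrak A, [\cdot,\ldots,\cdot])$ and translate strong $f$-invariance into the notion of ideal and $f$-orthogonality into the stated product-vanishing condition,'' and I would present it essentially at that level of brevity, since all the analytic content was discharged in Section 2.
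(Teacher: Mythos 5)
Your proposal is correct and matches the paper's own argument exactly: the paper likewise applies Theorem \ref{theo1} to $({\mathfrak A}, [\cdot,\ldots,\cdot])$ with $f$ the product, then observes that strong $f$-invariance is precisely the definition of an ideal and that the $f$-orthogonality of the decomposition gives the stated product-vanishing condition. Nothing is missing.
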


\smallskip

In the same context, if we restrict the product $[\cdot, \ldots, \cdot ]$ of ${\mathfrak A}$ to any ideal $\overbrace{{\mathfrak A}_{[e_i]}}$, we get the algebra $(\overbrace{{\mathfrak A}_{[e_i]}}, [\cdot, \ldots, \cdot ])$. Now, by observing that the $f'$-simplicity of $(\overbrace{{\mathfrak A}_{[e_i]}}, [\cdot, \ldots, \cdot ])$ is equivalent to the simplicity of $(\overbrace{{\mathfrak A}_{[e_i]}}, [\cdot, \ldots, \cdot ])$ as an algebra, and  that ${\rm Ann}(f^{\prime})={\rm Ann}(\overbrace{{\mathfrak A}_{[e_i]}})$, Theorem \ref{ane100}
allows us to assert the following.

\begin{theorem}
  The ideal  $(\overbrace{{\mathfrak A}_{[e_i]}}, [\cdot, \ldots, \cdot ])$ is  simple  if and only if
${\rm Ann}(\overbrace{{\mathfrak A}_{[e_i]}})=0$ and  ${\mathcal B}^{\prime}:={\mathcal B} \cap \overbrace{{\mathfrak A}_{[e_i]}}$ is an $i$-division basis of $\overbrace{{\mathfrak A}_{[e_i]}}$.
\end{theorem}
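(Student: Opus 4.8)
The plan is to observe that, once the product $[\cdot,\ldots,\cdot]$ of ${\mathfrak A}$ is written as the $n$-linear map $f$, every algebraic notion appearing in the statement is a term-by-term translation of the corresponding $f$-theoretic notion. The theorem will then follow by applying Theorem \ref{ane100} to the restriction $f^{\prime}=[\cdot,\ldots,\cdot]|_{\overbrace{{\mathfrak A}_{[e_i]}}}$ on $\overbrace{{\mathfrak A}_{[e_i]}}$.

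First I would record the dictionary between the two settings. Comparing the definition of an ideal of $\overbrace{{\mathfrak A}_{[e_i]}}$, namely $[\overbrace{{\mathfrak A}_{[e_i]}},\ldots,{\mathfrak I}^{(r)},\ldots,\overbrace{{\mathfrak A}_{[e_i]}}] \subset {\mathfrak I}$ for all $r$, with the definition of a strongly $f^{\prime}$-invariant subspace, I note these are the identical condition under $f^{\prime}=[\cdot,\ldots,\cdot]$. Hence the family of ideals of $(\overbrace{{\mathfrak A}_{[e_i]}},[\cdot,\ldots,\cdot])$ coincides with the family of strongly $f^{\prime}$-invariant subspaces. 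Likewise, the requirement that the product be nonzero is exactly $f^{\prime}(\overbrace{{\mathfrak A}_{[e_i]}},\ldots,\overbrace{{\mathfrak A}_{[e_i]}})\neq 0$, so simplicity of the algebra and $f^{\prime}$-simplicity are the same property. I would also match ${\rm Ann}(\overbrace{{\mathfrak A}_{[e_i]}})$ with ${\rm Ann}(f^{\prime})$ directly, since the two definitions of annihilator coincide once the product is read as $f^{\prime}$.

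Second, I would check that the two notions of $i$-division basis agree. The only point to verify is that the ideal ${\mathcal I}(w)$ of $\overbrace{{\mathfrak A}_{[e_i]}}$ generated by $w$ coincides with the strongly $f^{\prime}$-invariant subspace generated by $w$. This is immediate from the previous paragraph: since the two families of subspaces coincide, so do their respective minimal members containing $w$. Consequently the phrase ``${\mathcal B}^{\prime}$ is an $i$-division basis of the algebra'' reads exactly as ``${\mathcal B}^{\prime}$ is an $i$-division basis of $\overbrace{{\mathfrak A}_{[e_i]}}$ with respect to $f^{\prime}$''.

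With these three ingredients identified, Theorem \ref{ane100} applied to the linear space $\overbrace{{\mathfrak A}_{[e_i]}}$ equipped with $f^{\prime}$ gives that $\overbrace{{\mathfrak A}_{[e_i]}}$ is $f^{\prime}$-simple if and only if ${\rm Ann}(f^{\prime})=0$ and ${\mathcal B}^{\prime}$ is an $i$-division basis, and translating back through the dictionary yields precisely the asserted equivalence. I do not expect a genuine obstacle here: the argument is entirely bookkeeping, and the only step deserving explicit care is the identification of ``ideal generated by $w$'' with ``strongly $f^{\prime}$-invariant subspace generated by $w$'', which must be stated so that the two $i$-division-basis notions are literally the same rather than merely analogous.
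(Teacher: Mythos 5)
Your proposal is correct and follows essentially the same route as the paper: the paper's own justification is precisely the observation that simplicity, the annihilator, and the $i$-division property of $(\overbrace{{\mathfrak A}_{[e_i]}},[\cdot,\ldots,\cdot])$ translate verbatim into $f^{\prime}$-simplicity, ${\rm Ann}(f^{\prime})$, and the $i$-division basis condition with respect to $f^{\prime}$, after which Theorem \ref{ane100} is invoked. Your explicit check that the ideal generated by $w$ coincides with the strongly $f^{\prime}$-invariant subspace generated by $w$ is a point the paper leaves implicit, but it is the same argument.
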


\bigskip


\begin{thebibliography}{99}


\bibitem{bcks} 
Barreiro E.; Calder\'on A.; Kaygorodov I.; Sánchez J.M.: 
$n$-Ary  generalized Lie-type color algebras admitting a quasi-multiplicative basis,
arXiv:1801.02071


\bibitem{kmod} 
Barreiro, E.; Kaygorodov, I.; Sánchez, J. M.:
$k$-Modules over linear spaces by $n$-linear maps admitting a multiplicative basis,
arXiv:1707.07483 


\bibitem{Yo} Calder\'on A.; Navarro F.J.: 
Arbitrary algebras with a multiplicative basis. {\it  Linear Algebra Appl.} 498 (2016), 106-116.

\bibitem{Yo2} Calder\'{o}n, A.J.:  Associative algebras admitting a quasi-multiplicative basis. {\it Algebr. Represent. Theory}  17 (2014), no. 6, 1889-1900.

\bibitem{Yo3}  Calder\'{o}n, A.J.:  Lie algebras with a set grading. {\it Linear Algebra Appl.} 452 (2014), 7-20.

\bibitem{Yo4}  Calder\'{o}n, A.J.: Decompositions of linear spaces induced by bilinear maps, {\it Linear Algebra Appl.} 
 542 (2018),  209--224.




\bibitem{Yo_n_algebras} Calder\'on, A.J.; Navarro, F.J.; S\'anchez, J.M.: $n$-Algebras admitting a multiplicative basis. {\it J. Algebra Appl.} { 16} (2018), 11, 1850025 (11 pages).


\bibitem{Cao2}  Cao, Y.;  Chen, L.Y.:  On the structure of graded Leibniz triple systems. {\it Linear Algebra Appl.} 496 (2016), 496-509.

\bibitem{Fil} Filippov, V.: $n$-Lie algebras,
   {\it Sib. Math. J.}  26 (1985), 6,  126-140.

\bibitem{kps}
Kaygorodov, I.;  Pozhidaev, A.; Saraiva, P.:
On a ternary generalization of Jordan algebras, 
Linear and Multilinear algebra, DOI: 10.1080/03081087.2018.1443426



\bibitem{Pojidaev}  Pozhidaev, A.:
$n$-ary Mal'tsev algebras, {\it Algebra and Logic}  40 (2001), 3, 309-329.








\end{thebibliography}
\end{document}